\newcommand*{\rom}[1]{\expandafter\@slowromancap\romannumeral #1@}
\newtheorem{theorem}{Theorem}[section]
\theoremstyle{definition}
\newtheorem{notation}[theorem]{Notation}
\newtheorem{proposition}[theorem]{Proposition}
\newtheorem{remark}[theorem]{Remark}
\newtheorem{definition}[theorem]{Definition}
\newtheorem{lemma}[theorem]{Lemma}
\newtheorem{corollary}[theorem]{Corollary}
\newtheorem{example}[theorem]{Example}
\definecolor{darkblue}{rgb}{0.0, 0.0, 0.8}
\definecolor{darkred}{rgb}{0.8, 0.0, 0.0}
\definecolor{darkgreen}{rgb}{0.0, 0.5, 0.0}
\definecolor{darkgray}{rgb}{0.66, 0.66, 0.66}
\newcommand{\one}{\mathbbm{1}}
\newcommand{\cov}{\mathrm{cov}}
\newcommand{\con}{\mathbf{Con}}
\newcommand{\conop}{\con^{\mathrm{op}}}
\newcommand{\interop}{\inter^{\mathrm{op}}}
\newcommand{\rank}{\mathrm{rank}}
\newcommand{\inter}{\mathbf{Int}}
\newcommand{\niton}{\not\owns}
\newcommand{\ob}{\mathrm{ob}}
\newcommand{\Pb}{\mathbb{P}}
\newcommand{\QA}{\mathbb{A}}
\newcommand{\vect}{\mathbf{vec}}
\newcommand{\mult}{\mathrm{mult}}
\newcommand{\mrank}{\mathrm{multirk}}
\newcommand{\F}{k}
\newcommand{\Zb}{\mathbb{Z}}
\newcommand{\Zplus}{\mathbb{Z}_{\geq 0}}
\newcommand{\Rb}{\mathbb{R}}
\newcommand{\N}{\mathbb{N}}
\newcommand{\eps}{\varepsilon}
\newcommand{\dgm}{\mathrm{dgm}}
\newcommand{\intdgm}{\mathrm{dgm}_{\mathbb{I}}}
\newcommand{\abs}[1]{\left\lvert{#1}\right\rvert}
\newcommand{\rk}{\mathrm{rk}}
\newcommand{\mrk}{\mathrm{multirk}}
\newcommand{\barc}{\mathrm{barc}}
\newcommand{\Ccal}{\mathcal{C}}
\newcommand{\Ical}{\mathcal{I}}
\newcommand{\RNum}[1]{\uppercase\expandafter{\romannumeral #1\relax}}
\newcommand{\ba}{p}
\newcommand{\bb}{q}
\newcommand{\bc}{r}
\newcommand{\intrk} {\rk_{\mathbb{I}}}
\newcommand{\new}[1]                {{{#1}}}
\title{Bigraded Betti numbers and Generalized Persistence Diagrams}
\author[1]{Woojin Kim}
\author[2]{Samantha Moore}
\affil[1]{Department of Mathematical Sciences, KAIST, South Korea\thanks{\texttt{woojin.kim@kaist.ac.kr}}}
\affil[2]{Department of Mathematics, North Carolina School of Science and Mathematics- Morganton\thanks{\texttt{samantha.moore@ncssm.edu}}}
\begin{document}

\maketitle

\begin{abstract}Commutative diagrams of vector spaces and linear maps over $\Zb^2$ are objects of interest in topological data analysis (TDA) where  this type of  diagrams are called 2-parameter persistence modules. Given that quiver representation theory tells us that such diagrams are of wild type, studying informative invariants of a 2-parameter persistence module $M$ is of central importance in TDA. One of such invariants is the generalized rank invariant, recently introduced by Kim and M\'emoli.  Via the M\"obius inversion of the generalized rank invariant of $M$, we obtain a collection of connected subsets $I\subset\Zb^2$ with signed multiplicities. This collection generalizes the well known notion of \emph{persistence barcode} of a persistence module over $\mathbb{R}$ from TDA.  In this paper we show that the bigraded Betti numbers of $M$, a classical algebraic invariant of $M$, are obtained by counting the corner points of these subsets $I$s. Along the way, we verify that an invariant of 2-parameter persistence modules called the interval decomposable approximation (introduced by Asashiba et al.) also encodes the bigraded Betti numbers in a similar fashion. 
\new{We also show that the aforementioned results are optimal in the sense that they cannot be extended to $d$-parameter persistence modules for $d \geq 3$.}
\end{abstract}

\paragraph{keywords.} Multiparameter persistence, Multigraded Betti numbers, Quiver representations, M\"obius inversion, Persistent homology, Persistence diagram


\maketitle


\section{Introduction}

\paragraph{Multiparameter persistent homology.} 
Theoretical foundations of \emph{persistent homology}, one of the main protagonists in topological data analysis (TDA), have been rapidly developed in the last two decades, allowing  a large number of applications. Persistent homology is obtained by applying the homology functor to an $\Rb$ (or $\Zb$) -indexed increasing family of topological spaces \cite{carlsson2009topology,edelsbrunner2010computational}. This parametrized family of topological spaces, for example, often arises as either a \emph{sublevel set filtration} of a real-valued map on a topological space, or the \emph{Vietories-Rips simplicial filtration} of a metric space.

With more complex input data, we obtain $\Rb^d$-indexed increasing families ($d>1$) of topological spaces, e.g. a sublevel set filtration of a topological space that is filtered by multiple real-valued functions, or a Vietories-Rips-sublevel simplicial filtration of a metric space  equipped with a map \cite{carlsson2009topology,carlsson2009theory}. By applying the homology functor (with coefficients in a fixed field $\F$) to such a multiparameter filtration, we obtain a \emph{$d$-parameter persistence module} $\Rb^d\mbox{(or $\Zb^d$)}\rightarrow \vect$, a functor from the poset $\Rb^d\mbox{(or $\Zb^d$)}$ to the category $\vect$ of finite dimensional vector spaces and linear maps over the field $k$. 
In contrast to the case of $d=1$, there is no discrete and complete invariant  for $\Rb^d \mbox{(or $\Zb^d$)}\rightarrow \vect$ for $d>1$ \cite{carlsson2009theory}. In quiver representation theory, functors $\Rb^d\mbox{(or $\Zb^d$)}\rightarrow \vect$ ($d>1$) are of \emph{wild type}, implying that there is no simple invariant which completely encodes the isomorphism type of $\Rb^d\mbox{(or $\Zb^d$)}\rightarrow \vect$  \cite{derksen2005quiver,gabrielsthm}.
Nevertheless, there have been many studies on the invariants of $d$-parameter persistence modules, e.g.  \cite{carlsson2009theory,chacholski2017combinatorial,harrington2019stratifying,knudson2007refinement,miller2020homological,scolamiero2017multidimensional,vipond2020multiparameter}. 

Special attention has been placed on the case of $d=2$ \cite{asashiba2018interval,asashiba2019approximation,botnan2018algebraic,botnan2020rectangle,cochoy2020decomposition,dey2021rectangular,escolar2016persistence,lesnick2015interactive} in part because 2-parameter filtrations arise in the study of interlevel set persistence \cite{botnan2018algebraic,carlsson2009zigzag}, in the study of point cloud data with non-uniform density \cite{bauer2020cotorsion,cai2020elder,carlsson2010multiparameter,carlsson2009theory}, or in applications in material science and chemical engineering \cite{escolar2016persistence,hiraoka2016hierarchical,keller2018persistent}. The software RIVET \cite{lesnick2015interactive}  can efficiently compute and visualize the dimension function (a.k.a. the Hilbert function), the fibered barcode, and the bigraded Betti numbers of a 2-parameter persistence module.

\paragraph{Multigraded Betti numbers.} 

Multigraded Betti numbers encode important information about the algebraic structure of a multigraded module over the polynomial ring in $n$ variables \cite{eisenbudbetti,miller2005combinatorial}. For multiparameter persistence modules that arise from data, multigraded Betti numbers provide insight about the coarse-scale topological features of the data (cf. \cite{cai2020elder}). For 2-parameter persistence modules, the multigraded Betti numbers are also called the \emph{bigraded} Betti numbers. RIVET \cite{lesnick2015interactive} represents the bigraded Betti numbers of a 2-parameter persistence module as a collection of colored dots in the plane. More interestingly, RIVET employs the bigraded Betti numbers to implement an interactive visualization of the fibered barcode. Recently, Lesnick-Wright \cite{lesnick2019computing} and Kerber-Rolle \cite{kerber2021fast} developed efficient algorithms for computing minimal presentations and the bigraded Betti numbers of 2-parameter persistence modules.

\paragraph{Persistence diagram and its generalizations.} In most applications of 1-parameter persistent homology, the notion of \emph{persistence diagram} \cite{edelsbrunner2000topological,landi1997new} (or equivalently \emph{barcode} \cite{carlsson2005persistence}; cf. Definition \ref{def:barcode})  plays a central role. The persistence diagram of any $M:\Rb\rightarrow \vect$ is not only a visualizable topological summary of $M$, but also a \emph{stable} and \emph{complete} invariant of $M$ \cite{cohen2007stability}. In contrast, as mentioned before, there is no simple complete invariant for $d$-parameter persistence modules when $d>1$.

Patel introduced the notion of \emph{generalized persistence diagram} for \emph{constructible} functors $\Rb\rightarrow \Ccal$, in which $\Ccal$ satisfies certain properties \cite{patel2018generalized}. Construction of the generalized persistence diagram is based on the observation that the persistence diagram of $M:\Rb\rightarrow \vect$ 
\cite{edelsbrunner2000topological} is an instance of the M\"obius inversion of the \emph{rank invariant} \cite{carlsson2009theory} of $M$. McCleary and Patel showed that the generalized persistence diagram is stable when $\Ccal$ is a skeletally small abelian category \cite{mccleary2018bottleneck}.  Kim and M\'emoli further extended Patel's generalized persistence diagram to the setting of functors $\Pb\rightarrow \Ccal$ in which $\Pb$ is a \emph{essentially} finite poset such as a finite $d$-dimensional grid  \cite{kim2018generalized}. The generalized  persistence diagram of $\Pb\rightarrow \Ccal$ is defined as the M\"obius inversion of the \emph{generalized rank invariant} of $\Pb\rightarrow \Ccal$. The generalized  persistence diagram is not only a complete invariant 
of interval decomposable persistence modules $\Pb\rightarrow \vect$ (Theorem \ref{thm:GPD is barcode proxy}), but is also well-deﬁned regardless of the interval decomposability. The generalized rank invariant of $\Rb^d\mbox{(or $\Zb^d$)}\rightarrow \vect$ is proven to be stable with respect to a certain generalization of the erosion distance \cite{patel2018generalized} and the interleaving distance  \cite{lesnick2015theory} (see the latest version of the arXiv preprint of \cite{kim2018generalized}).

\paragraph{Our contributions.} 
Assume that a given $M:\Zb^2\rightarrow \vect$ is finitely generated. We establish a combinatorial formula for extracting the bigraded Betti numbers of $M$ from the generalized persistence diagram of $M$ (Theorem \ref{thm:pictorial formula}). More interestingly, the formula we found is a generalization of a well-known formula for extracting the bigraded Betti numbers from interval decomposable persistence modules (Theorem \ref{thm:bigraded bettis for an interval2}). 

Namely, for any finitely generated \emph{interval decomposable} $M:\Zb^2\rightarrow\vect$, there is a visually intuitive  way to find the bigraded Betti numbers of $M$ from the indecomposable summands of $M$. An example of this process is shown in Fig. \ref{fig:introduction} (A)-(C). For \emph{any} finitely generated $N:\Zb^2\rightarrow \vect$, which may not be interval decomposable, we utilize a similar process to find the bigraded Betti numbers of $N$ from the \emph{($\inter$-)generalized persistence diagram} of $N$. This process is shown in Fig. \ref{fig:introduction} (A')-(C'). In a sense, Theorem \ref{thm:pictorial formula} thus  reinforces the viewpoint that the ($\inter$-)generalized persistence diagram is a proxy for the \emph{barcode} (Definition \ref{def:barcode}) of persistence modules \cite{asashiba2019approximation,kim2018generalized}.

One implication of Theorem \ref{thm:pictorial formula} is that all invariants of 2-parameter persistence modules that are
computed by the software RIVET \cite{lesnick2015interactive} are encoded by the generalized persistence diagram. In other words, we obtain the following hierarchy of invariants
for \emph{any} finitely generated $M:\Zb^2\rightarrow\vect$, where invariant A is placed
above invariant B if invariant B can be recovered from invariant A:
\begin{figure}
    \centering
    \includegraphics[width=\textwidth]{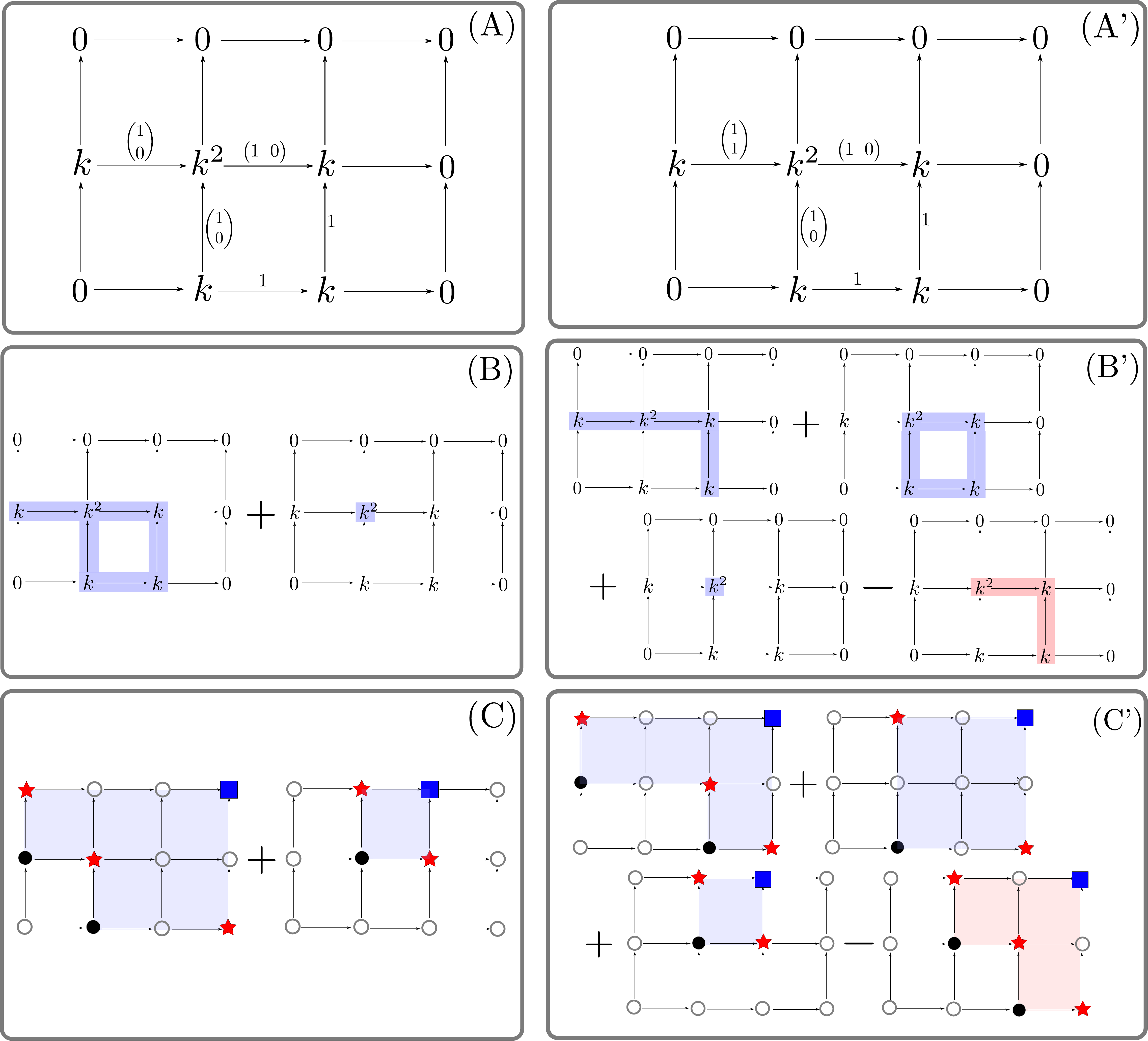}
    \caption{(A) A $\Zb^2$-indexed persistence module $M$ whose support is contained in a $3\times 4$ grid. (B) $M$ is interval decomposable, and the barcode of $M$ consists of the two blue intervals of $\Zb^2$ (Definitions \ref{def:intervals} and \ref{def:barcode}). (C) Expand each of the blue intervals from (B) to intervals in $\Rb^2$ as follows: Each point $p=(p_1,p_2)$ in the two intervals is expanded to the unit square $[p_1,p_1+1)\times[p_2,p_2+1)\subset \Rb^2$. Black dots, red stars and blue squares indicate three different \textit{corner types} of the expanded intervals (see Fig. \ref{fig:corner_points}). The bigraded Betti numbers of $M$ can be read from these corner types; for each $p\in \Zb^2$, $\beta_j(M)(p)$ is equal to the number of black dots, red stars, and blue squares at $p$ when $j=0,1,2$, respectively. (A') Another $\Zb^2$-indexed persistence module $N$ whose support is contained in a $3\times 4$ grid. $N$ is not interval decomposable. (B') The $\inter$-generalized persistence diagram of $N$ (Definition \ref{def:IGPD}) is shown, where the multiplicity of the red interval is -1 and the multiplicity of each blue interval is 1. (C') is similarly interpreted as in (C), where corner points of the red interval negatively contribute to the counting of the bigraded Betti numbers.  More details are provided in Example \ref{ex:pictorial interpretation}.}
    \label{fig:introduction}
\end{figure}

\begin{center}
\begin{tikzpicture}
    \node (top) at (0,0) {Generalized persistence diagram};
    \node (middle left) at (-3,-1)  {Fibered barcode};
    \node (middle right) at (3,-1) {Bigraded Betti numbers};
    \node (bottom) at (0,-2) {Hilbert function};
    \draw [thick, shorten <=-2pt, shorten >=-2pt] (top) -- (middle left);
    \draw [thick, shorten <=-2pt, shorten >=-2pt] (top) -- (middle right);
     \draw [thick, shorten <=-2pt, shorten >=-2pt] (middle left) -- (bottom);
     \draw [thick, shorten <=-2pt, shorten >=-2pt] (middle right) -- (bottom);
\end{tikzpicture}
\end{center}
We remark that the generalized persistence diagram is equivalent to the generalized rank invariant (Definitions \ref{def:GPD} and \ref{def:IGPD}). Also, the fibered barcode is equivalent to the (standard) rank invariant \cite{carlsson2009theory}. 
Hence, in the diagram above,  \textit{generalized persistence diagram} and \textit{fibered barcode} can be replaced by \textit{generalized rank invariant} and \textit{rank invariant}, respectively.

In the course of establishing Theorem \ref{thm:pictorial formula}, we verify that the \emph{interval decomposable approximation} \new{and \emph{multirank invariant}} of 2-parameter persistence modules
(introduced by Asashiba et al. \cite{asashiba2019approximation}\new{ and Thomas \cite{thomas2019invariants}, respectively}) also 
\new{encode} the bigraded Betti numbers (Remark \ref{rem:asashiba} and Corollary \ref{cor:multirank determines bettis}).

\begin{remark}\label{rem:meaning}
    It should not  be construed that Theorem \ref{thm:pictorial formula} provides a practically efficient way to compute the bigraded Betti numbers. Rather, we hope that the aforementioned efficient algorithms to compute the bigraded Betti numbers could be useful for approximating the generalized persistence diagram.
\end{remark}

\new{We also show that for $d\geq 3$, the generalized persistence diagram does not determine the multigraded Betti numbers of $d$-parameter persistence modules (Theorem \ref{thm:no pictorial formula}).} 
\new{Lastly, we show that, in general, the generalized persistence diagram
and the multirank invariant do not determine each other (Examples \ref{ex:identical GPD, different mrk} and \ref{ex:identical mrk, different GPD}). }

\paragraph{Other related work.} McCleary and Patel utilized the M\"obius inversion formula for establishing a functorial pipeline to summarize simplicial filtrations over finite lattices into persistence diagrams \cite{mccleary2020edit}. Botnan et al. introduced notions of signed barcode and rank decomposition for encoding the rank invariant of multiparameter persistence modules as  a linear combination of rank invariants of indicator modules \cite{botnan2021signed}. In their paper, M\"obius inversion was utilized for computing the rank decomposition, characterizing the generalized persistence diagram in terms of rank decompositions. Asashiba et al. provided a criterion for determining whether or not a given multiparameter persistence module is interval decomposable without having to explicitly compute indecomposable decompositions  
\cite{asashiba2018interval}.  
Dey and Xin proposed an efficient algorithm for decomposing
multiparameter persistence modules and introduced a notion of \emph{persistent} graded Betti numbers, a refined version of the graded Betti numbers \cite{dey2019generalized}. Dey et al. reduced the problem of computing the generalized rank invariant of a given 2-parameter persistence module to computing the indecomposable decompositions of zigzag persistence modules \cite{dey2021computing}. Blanchette et al. developed a theoretical framework for building new invariants of a  persistence module over a poset using homological algebra \cite{blanchette2021homological}.

\paragraph{Organization.} In Section \ref{sec:background}, we review the 
notions of persistence modules, multigraded Betti numbers, and 
generalized persistence diagrams. In Section \ref{sec:BettiNumbers}, we show that the bigraded Betti numbers can be recovered from the generalized persistence diagram. In Section \ref{sec:discussion}, we discuss open questions. \new{In the appendix, we prove that (a) the multirank invariant (introduced in \cite{thomas2019invariants}) also determines the bigraded Betti numbers, and that (b) in general, the generalized persistence diagram and the multirank invariant do not determine each other.}

\paragraph{Acknowledgments.}
The authors would like to thank Dr. Ezra Miller, Dr. Rich\'ard Rim\'anyi, Dr. Facundo M\'emoli, and Dr. Amit Patel for their invaluable comments. We also thank anonymous reviewers for their invaluable feedback on earlier versions of the paper. 
Samantha Moore is supported by a National Science Foundation Graduate Research Fellowship under Grant No. 1650116.

\section{Preliminaries}\label{sec:background}

In Section \ref{sec:persistence modules}, we review the notions of persistence modules and interval decomposability. In Section \ref{sec:bigraded betti numbers}, we recall the notion of multigraded Betti numbers (an invariant of multiparameter persistence modules). In Section \ref{sec:mobius inversion}, we review the  M\"obius inversion formula in combinatorics. In Section \ref{sec:generalized rank invariant}, we review the notions of generalized rank invariant and generalized persistence diagram. In Section \ref{sec:m times n module}, we provide a formula of the ($\inter$-)generalized persistence diagram in a certain setting, which will be useful in the next section.

\subsection{Persistence modules and their interval decomposability}\label{sec:persistence modules}

Let $\Pb$ be a poset. We regard $\Pb$ as the category that has points of $\Pb$ as its objects and for $p,q\in\Pb$ there is a unique morphism $\ba\rightarrow \bb$ if and only if $\ba\leq \bb$ in $\Pb$. For $d\in\N$, let $\Rb^d$ and subsets of $\Rb^d$ (such as $\Zb^d$) be given the partial order defined by $(a_1,a_2,\ldots,a_d)\leq (b_1,b_2,\ldots,b_d)$ if and only if $a_i\leq b_i$ for $i=1,2,\ldots,d$.  

Every vector space in this paper is over some fixed field $\F$. Let $\vect$ denote the category of \emph{finite dimensional} vector spaces  and linear maps over $\F$.

A $\Pb$\textbf{-indexed persistence module}, or simply a \textbf{$\Pb$-module}, refers to a functor $M:\Pb\rightarrow \vect$. In other words, to each $\ba\in \Pb$, a vector space $M(\ba)$ is associated, and to each pair $\ba\leq \bb$ in $\Pb$, a linear map $\varphi_{M}(\ba,\bb):M(\ba)\rightarrow M(\bb)$ is associated. Importantly, whenever $p\leq q\leq r$ in $\Pb$, it is required that $\varphi_M(p,r)=\varphi_M(q,r)\circ \varphi_M(p,q)$. When $\Pb=\Rb^d$ or $\Zb^d$, $M$ is also called a \textbf{$d$-parameter persistence module}. 

Consider a \textit{zigzag poset} of $n$ points, 
\begin{equation}\label{eq:zigzag poset}
    \bullet_{1}\leftrightarrow \bullet_{2} \leftrightarrow \ldots \bullet_{n-1} \leftrightarrow \bullet_n
\end{equation}
where $\leftrightarrow$ stands for either $\leq$ or $\geq$. A functor from a zigzag poset (of $n$ points) to $\vect$ is called a \textbf{zigzag module} (of length $n$) \cite{carlsson2010zigzag}. 

A \textbf{morphism} between  $\Pb$-modules $M$ and $N$ is a natural transformation $f:M\rightarrow N$ between $M$ and $N$. That is, $f$ is a collection $\{f_{\ba}: M(\ba)\rightarrow N(\ba)\}_{\ba\in \Pb}$ of linear maps such that for every pair $\ba\leq\bb$ in $\Pb$, the following diagram commutes:
 \[\begin{tikzcd}
M(\ba) \arrow{r}{\varphi_M(\ba,\bb)} \arrow{d}{f_{\ba}}
&M(\bb) \arrow{d}{f_{\bb}}\\
N(\ba) \arrow{r}{\varphi_N(\ba,\bb)} &N(\bb).
\end{tikzcd}\]

The \textbf{kernel} of $f$, denoted by $\ker(f):\Pb\rightarrow \vect$, is defined as follows: For $p\in \Pb$, $\ker(f)(p):=\ker(f_p)\subseteq M(p)$. For $p\leq q$ in $\Pb$, $\varphi_{\ker(f)}(p,q)$ is the restriction of $\varphi_M(p,q)$ to $\ker(f_p)$. 
Two $\Pb$-modules $M$ and $N$ are (naturally) \textbf{isomorphic}, denoted by $M\cong N$, if there exists a natural transformation $\{f_{\ba}\}_{\ba\in \Pb}$ from $M$ to $N$ where each $f_{\ba}$ is an isomorphism. 

The \textbf{direct sum} $M\bigoplus N$ of $M,N:\Pb\rightarrow \vect$ is the $\Pb$-module where $(M\bigoplus N)(p)=M(p)\bigoplus N(p)$ for $p\in \Pb$ and $\varphi_{M\bigoplus N}(p,q)=\varphi_{M}(p,q)\bigoplus \varphi_{N}(p,q)$ for $p\leq q$ in $\Pb$.  A nonzero $\Pb$-module $M$ is \textbf{indecomposable} if whenever $M=M_1\bigoplus M_2$ for some $\Pb$-modules $M_1$ and $M_2$, either $M_1=0$ or $M_2=0$.

\begin{theorem}[Krull-Remak-Schmidt-Azumaya \cite{azumaya1950corrections}]
\label{thm:direct sum decomposable} 
Any $\Pb$-module $M$ has a direct sum decomposition $M\cong \bigoplus\limits_{i} M_i$ where each $M_i$ is indecomposable. Such a decomposition is unique up to isomorphism and reordering of the summands.
\end{theorem}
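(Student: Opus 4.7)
The plan is to combine two classical results: the first giving existence, and the second giving uniqueness of the decomposition. Since every value $M(\ba)$ lies in $\vect$, the module $M$ is \emph{pointwise finite dimensional}, which is the hypothesis under which Botnan and Crawley-Boevey establish the existence of an indecomposable decomposition for persistence modules over an arbitrary poset. Their proof constructs the decomposition by analyzing the category of pointwise finite-dimensional $\Pb$-modules and showing not only that each summand is indecomposable but, crucially, that each indecomposable summand has a \emph{local endomorphism ring}. That property is what allows uniqueness to be deduced from general module-theoretic principles.

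For the uniqueness statement, the plan is to invoke Azumaya's theorem: if a module (in any suitable additive category with kernels and the relevant finiteness properties) admits a direct sum decomposition into summands each having local endomorphism ring, then the decomposition is unique up to isomorphism and reordering. Since $\vect$-valued functor categories on $\Pb$ are abelian with all the necessary structure, and since the Botnan–Crawley-Boevey step supplies summands with local endomorphism rings, Azumaya's theorem applies immediately.

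So the proof I would write is essentially: first cite Botnan and Crawley-Boevey to produce $M \cong \bigoplus_{i} M_i$ with each $M_i$ indecomposable and with $\mathrm{End}(M_i)$ local; then quote Azumaya's theorem to conclude that any two such decompositions agree after a permutation of indices and isomorphism of summands. The main technical obstacle, were one to redo the argument rather than cite, is the first step: showing that indecomposable pointwise finite-dimensional $\Pb$-modules have local endomorphism rings. This is nontrivial because $\Pb$ may be arbitrary (not necessarily finite or totally ordered), so classical Krull–Schmidt arguments for finite-length modules do not apply directly; the argument in Botnan–Crawley-Boevey instead uses a careful transfinite construction exploiting the fact that endomorphisms act compatibly across all of $\Pb$ while restricting to maps of finite-dimensional vector spaces at each point.
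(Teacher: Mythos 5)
Your proposal is correct and matches the paper's treatment exactly: the paper gives no proof of its own but simply cites Botnan--Crawley-Boevey for the existence of a decomposition into indecomposables with local endomorphism rings and Azumaya's theorem for uniqueness, which is precisely the two-step argument you outline.
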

In what follows, we review the notion of interval decomposability. 

\begin{definition}\label{def:intervals}
Let $\Pb$ be a poset. An \textbf{interval} of $\Pb$ is a subset $I\subseteq \Pb$ such that: 	
\begin{enumerate*}[label=(\roman*)]
    \item $I$ is nonempty.    
    \item If $\ba,\bb\in I$ and $\ba\leq \bc\leq \bb$, then $\bc\in I$. \label{item:convexity}
    \item $I$ is \textbf{connected}, i.e. for any $\ba,\bb\in I$, there is a sequence $\ba=\ba_0,
		\ba_1,\cdots,\ba_\ell=\bb$ of elements of $I$ with either $\ba_i\leq\ba_{i+1}$ or $\ba_{i+1}\leq \ba_i$ for each $i\in[0,\ell-1]$.\label{item:interval3}\footnote{This definition of interval is not the standard definition of interval used in order theory but it is often used in the literature concerned with  persistence modules over posets; e.g. \cite{botnan2020decomposition}. 
In order theory language, $I$ is a nonempty convex connected subset of $\Pb$.} 
\end{enumerate*}
By $\inter(\Pb)$, we denote the set of all intervals of $\Pb$.
\end{definition}

For example, any interval of a zigzag poset in (\ref{eq:zigzag poset}) is a set of consecutive points in $\{\bullet_1,\bullet_2,\ldots,\bullet_n\}$. 

For an interval $I$ of a poset $\Pb$, the \textbf{interval module} $V_I:\Pb\rightarrow \vect$ is defined as
\[V_I(\ba)=\begin{cases}
k&\mbox{if}\ \ba\in I\\0
&\mbox{otherwise,} 
\end{cases}\hspace{20mm} \varphi_{V_I}(\ba,\bb)=\begin{cases} \mathrm{id}_k& \mbox{if} \,\,\ba,\bb\in I,\ \ba\leq \bb\\ 0&\mbox{otherwise.}\end{cases}\]

It is well-known that any interval module is indecomposable \cite[Proposition 2.1]{botnan2018algebraic}.

\begin{definition}\label{def:barcode}
A $\Pb$-module $M$ is said to be \textbf{interval decomposable} if there exists a multiset $\barc(M)$ of intervals  of $\Pb$ such that $M\cong \bigoplus\limits_{\mathclap{I\in\barc(M)}}V_I.$
We call $\barc(M)$ the \textbf{barcode} of $M.$
\end{definition}

\begin{theorem}[\cite{azumaya1950corrections,crawley2015decomposition,gabrielsthm}]\label{thm:interval decomposable}
For $d=1$, any  $M:\Rb^d\ \mbox{(or $\Zb^d$)}\ \rightarrow\vect$ is interval decomposable and thus admits a (unique) barcode. However, for $d\geq 2$, $M$ may not be interval decomposable. Lastly, any zigzag module is interval decomposable and thus admits a (unique) barcode. 
\end{theorem}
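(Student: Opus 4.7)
The plan is to verify each of the three assertions separately and then conclude uniqueness of the barcode (when it exists) from the Krull--Schmidt--Azumaya statement already recorded as Theorem \ref{thm:direct sum decomposable}: once we know $M \cong \bigoplus_i V_{I_i}$ for some multiset of intervals, Azumaya's theorem forces that multiset to be unique up to reordering, since each $V_I$ is indecomposable (its endomorphism ring is $\F$, hence local).

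For the first assertion when $d=1$, I would split into two cases according to whether the indexing poset is $\Z$ or $\R$. If $\Pb = \Z$ and $M$ has finite support, then $M$ may be viewed as a representation of a type-$A_n$ quiver with linear orientation; Gabriel's theorem (cited as \cite{gabrielsthm}) says that the indecomposable representations of an $A_n$ quiver are in bijection with its positive roots, and under the linear orientation these are precisely the interval modules $V_{[i,j]}$, so $M$ decomposes into intervals. For $M$ with infinite support or for $\Pb = \R$, I would invoke the generalization of Gabriel's theorem to pointwise finite dimensional persistence modules over totally ordered sets due to Crawley-Boevey (\cite{crawley2015decomposition}). The strategy there is: (i) show that any pfd module over a totally ordered poset admits a direct sum decomposition with indecomposable summands (this needs the Azumaya-style argument since the poset is infinite), and (ii) classify the indecomposable summands by analyzing the local endomorphism ring to conclude that the support of any indecomposable pfd module on $\R$ is an interval and that the structure maps within the support are identities after a change of basis.

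For the zigzag case, the same Gabriel's theorem applies because a zigzag poset of $n$ points is the underlying poset of an $A_n$ quiver with \emph{arbitrary} orientation; Gabriel's classification is orientation independent, and again the indecomposable representations correspond to intervals of consecutive points of the zigzag. For the claim that interval decomposability can \emph{fail} when $d\geq 2$, I would exhibit an explicit indecomposable $M : \{0,1\}^2 \to \vect$ that is not an interval module. A standard choice is $M(0,0) = \F^2$, $M(1,0) = M(0,1) = \F^2$, $M(1,1) = \F^2$, with the four structure maps given by $\mathrm{id}$, $\mathrm{id}$, $\mathrm{id}$, and a $2\times 2$ Jordan block (or, more simply, the rank-one module with $M(0,0) = \F$, $M(1,0) = M(0,1) = \F^2$, $M(1,1) = \F$ where the outgoing maps from $(0,0)$ land in two distinct lines that are then collapsed to a common one-dimensional quotient at $(1,1)$). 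A direct dimension check on the pointwise dimensions rules out any candidate barcode, while showing that the endomorphism ring is local (equivalently, that any candidate idempotent is forced to be $0$ or $\mathrm{id}$) demonstrates indecomposability.

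The main obstacle is the Crawley-Boevey step for continuous index set $\R$: one cannot reduce to a finite quiver, so the existence of a direct sum decomposition into indecomposables and the identification of those indecomposables as interval modules require an intrinsic argument about endomorphism rings of pfd functors rather than a direct appeal to Gabriel. The finite quiver cases and the counterexample are routine once the right references are in place, so in practice the theorem is most cleanly handled by citing \cite{gabrielsthm} for the $A_n$ and zigzag pieces, \cite{crawley2015decomposition} for the $\R$ piece, and a small explicit example for the failure at $d\geq 2$.
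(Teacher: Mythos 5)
Your treatment of the $d=1$ and zigzag assertions, and of uniqueness of the barcode, is sound and is essentially what the paper does: the theorem is stated here as a citation, with Gabriel's theorem covering finite $A_n$ quivers of arbitrary orientation, Crawley-Boevey's theorem covering pointwise finite dimensional modules over $\R$, and the Azumaya/Krull--Schmidt statement (Theorem \ref{thm:direct sum decomposable}) giving uniqueness because each $V_I$ has endomorphism ring $\F$, hence local. No issue there.

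The genuine gap is in your counterexample for $d\geq 2$. First, your primary candidate is not a persistence module at all: on $\{0,1\}^2$ the two composites $(0,0)\to(1,0)\to(1,1)$ and $(0,0)\to(0,1)\to(1,1)$ must agree, and with three identity maps and one nontrivial Jordan block they do not. (Also, even if it were valid, "a direct dimension check rules out any candidate barcode" fails for the constant dimension vector $(2,2,2,2)$, which is realized by $V_I\oplus V_I$ for $I$ the full square.) Second, and more fundamentally, no counterexample exists on the $2\times 2$ commutative grid: every pointwise finite dimensional representation of the poset $\{0,1\}^2$ is interval decomposable. One way to see this is via the Tits form of the bound quiver algebra $k(1\to 2,\ 1\to 3,\ 2\to 4,\ 3\to 4)/(\text{commutativity})$, namely $q(x)=\sum_i x_i^2-x_1x_2-x_1x_3-x_2x_4-x_3x_4+x_1x_4$; its positive roots are exactly the eleven $0$--$1$ vectors supported on intervals of the poset (in the sense of Definition \ref{def:intervals}), so every indecomposable is an interval module. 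Your second, vaguer candidate with dimension vector $(1,2,2,1)$ has $q=3$ and therefore cannot be indecomposable either. To repair this part you must pass to a larger grid; the cleanest fix within this paper is to take the module $N'$ of Example \ref{ex:pictorial interpretation}, for which $\intdgm(N')$ attains the value $-1$ on some interval --- by Theorem \ref{thm:GPD is barcode proxy} an interval decomposable module has $\intdgm$ equal to a multiplicity function, which is nonnegative, so $N'$ (and hence $N$) is not interval decomposable.
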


The following notation is useful in the rest of the paper.

\begin{notation}\label{def:multiplicity} 
Assume that a $\Pb$-module $M$ is isomorphic to the direct sum $\bigoplus_{i\in \Ical} M_i$ for some indexing set $\Ical$ where each $M_i$ is indecomposable. For $I\in \inter(\Pb)$, we define $\mult(I,M)$ as the cardinality of the set $\{i\in \Ical: M_i\cong V_I\}.$ In words, $\mult(I,M)$ is the number of those summands $M_i$ which are isomorphic to the interval module $V_I$.
\end{notation}

\subsection{Multigraded Betti numbers}\label{sec:bigraded betti numbers}

In this section we review the notion of multigraded Betti numbers \cite{eisenbudbetti}.

Fix any $p\in\Zb^d$. Then, the \emph{upper} set $p^\uparrow:=\{x\in \Zb^d: p\leq x\}$ determines an interval of $\Zb^d$. An $\Zb^d$-module $F$ is \textbf{free} if there exists $p_1, p_2, \cdots, p_n$ in $\Zb^d$ such that $F\cong\bigoplus\limits_{i=1}^n V_{{p_i}^{\uparrow}}$. 

Let $M$ be an $\Zb^d$-module. An element $v \in M(p)$ for some $p\in \Zb^d$ is called a \textbf{homogeneous} element of $M$. Assume that $M$ is \textbf{finitely generated}, i.e. there exist $p_1,\ldots,p_n \in \Zb^d$ and $v_i\in M(p_i)$ for $i=1,\ldots, n$ such that for any $p\in \Zb^d$ and for any nonzero $v\in M(p)$, there exist $c_i\in \F$ for $i=1,\ldots,n$ with \[v=\sum\limits_{i=1}^n c_i\cdot \varphi_M(p_i,p) (v_i).\] The collection $\{v_1,\ldots, v_n\}$ is called a (homogeneous) \textbf{generating set} for $M$.

Let us assume that $\{v_1, \ldots, v_n\}$ is a \emph{minimal} homogeneous generating set for $M$, i.e. there is no homogeneous generating set for $M$ that includes fewer than $n$ elements. 
Let $F_0:=\bigoplus\limits_{i=1}^n V_{p_i^{\uparrow}}$. 
For $i=1,\ldots,n$, let $1_{p_i}\in V_{p_i^{\uparrow}}(p_i)$.
Then, the set $\{1_{p_1},\ldots,1_{p_n}\}$ generates $F_0$ and 
the morphism $\eta_0 : F_0\rightarrow M$ defined by $\eta_0 (1_{p_i})=v_i$ for $i=1,\ldots,n$ is surjective. Let $K_0:=\ker (\eta_0)\subseteq F_0$ and let $\imath_0:K_0\hookrightarrow F_0$ be the inclusion map. Iterate this process using $K_0$ in place of $M$.\footnote{
We remark that $K_0$ is finitely generated by the following two facts. (1) A submodule of any finitely generated module over a Noetherian ring is finitely generated. (2) A finitely generated $\Zb^d$-module can be viewed as a module over the polynomial ring 
in $d$ variables, 
which is Noetherian; this viewpoint can be found in \cite{carlsson2009theory,miller2005combinatorial} for example.} 

Namely, identify a minimal homogeneous generating set $\{v_1',\ldots,v_m'\}$ for $K_0$ where $v_j'\in (K_0)_{p_j'}$ for some $p_1',\ldots,p_m'\in \Zb^d$ and consider the free module $F_1:=\bigoplus\limits_{j=1}^m V_{p_{j'}^{\uparrow}}$ and the surjection $\eta_1:F_1\rightarrow K_0$. Then we have the map $\imath_0\circ \eta_1:F_1\rightarrow F_0$. By repeating this process, we obtain a \textbf{minimal free resolution of $M$:} \[\cdots\xrightarrow{}F_2\xrightarrow{\imath_1 \circ\eta_2} F_1\xrightarrow{\imath_0\circ\eta_1} F_0\xrightarrow{\eta_0} M \xrightarrow{} 0.\] 
This resolution is unique up to isomorphism \cite[Theorem 1.6]{eisenbudbetti}. Hilbert's Syzygy Theorem  guarantees that $F_j=0$ for $j> d$ \cite{hilbertsyzygy}.

\begin{definition}
For $j=0,\ldots,d$, the $j^{\mathrm{th}}$ \textbf{multigraded Betti number} $\beta_j(M):\Zb^d\rightarrow \vect$ of $M$ is defined by mapping each $p\in \Zb^d$ to
\begin{equation*}\beta_j (M)(p):= \mult\left(p^{\uparrow}, F_j\right) \ \mbox{(Notation \ref{def:multiplicity})}.
\end{equation*}
When $d=2$, the multigraded Betti numbers are also called the \textbf{bigraded Betti numbers}. 
\end{definition}

We will see that for an interval decomposable $\Zb^2$-module $M$, its bigraded Betti numbers can be extracted from $\barc(M)$. To this end, we will make use of a certain regions that arise by ``blowing-up" intervals from $\barc(M)$:

\begin{definition}\label{def:corresponding region} Given any $I\in\inter(\Zb^2)$, the subset of $\Rb^2$ 
\begin{equation}\label{eq:region}
    I^+:=\bigcup_{(p_1,p_2)\in I}[p_1,p_1+1)\times [p_2,p_2+1)
\end{equation}
 will be referred to as the region corresponding to $I$ in $\Rb^2$.
\end{definition}

The following remarks are well-known; e.g.  \cite[Remarks 2.4 and 3.10]{cai2020elder}.

\begin{remark}\label{rem:bigraded bettis for an interval} 
\begin{enumerate}[label=(\roman*)]
    \item For any finitely generated $M,N:\Zb^d\rightarrow \vect$, we have $\beta_j(M\bigoplus N)=\beta_j(M)+\beta_j(N)$ for  $j=0,\ldots,d$.  \label{item:bigraded betti is additive}
    \item Let $I\in  \inter(\Zb^2)$. For the interval module $V_I:\Zb^2\rightarrow \vect$, the $j^{\mathrm{th}}$ bigraded Betti number $\beta_j(V_I)(p)$ is equal to $1$ if $p$ is a $j^{\mathrm{th}}$ type corner point of $I^{+}$ and is equal to $0$ otherwise; see Fig. \ref{fig:corner_points}.\label{item:bigraded bettis for an interval1}
\end{enumerate}
\end{remark}
\begin{figure}
    \centering
    \includegraphics[width=0.6\textwidth]{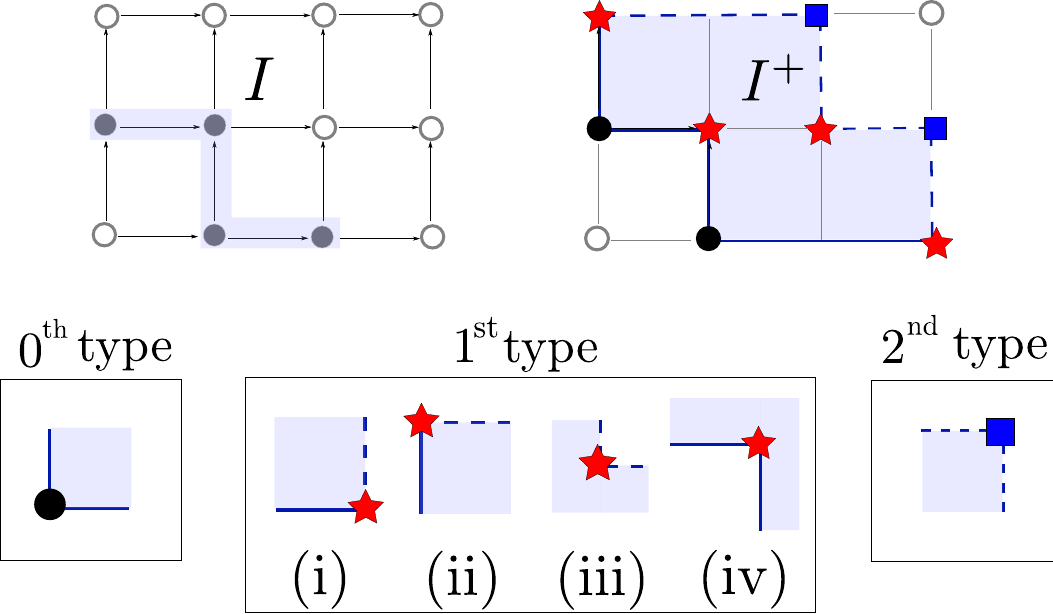}
    \caption{
    An interval $I\in\inter(\Zb^2)$ and  its corresponding region $I^+\subset \Rb^2$ with its corner points.
    Points on the upper boundary (dashed lines) do not belong to $I^+$, while points on the lower boundary (solid lines) belong to $I^+$. Points which lie on both boundaries do not belong to $I^+$.}
    \label{fig:corner_points}
\end{figure}
Remark \ref{rem:bigraded bettis for an interval} directly implies:

\begin{theorem}\label{thm:bigraded bettis for an interval2}
  Given any finitely generated \emph{interval decomposable} module $M:\Zb^2\rightarrow \vect$, the bigraded Betti numbers of $M$ can be extracted from $\barc(M)$. More specifically, the bigraded Betti numbers of $M$ can be extracted from the corner points of the elements in the multiset $$\{I^+\subset \Rb^2:I\in \barc(M)\}.$$ 
\end{theorem}

In Theorem \ref{thm:pictorial formula}, we remove the assumption that $M$ be interval decomposable and generalize Theorem \ref{thm:bigraded bettis for an interval2}  to the setting of \emph{any} finitely generated $\Zb^2$-modules.

\subsection{The M\"obius inversion formula in combinatorics}\label{sec:mobius inversion}

In this section, we briefly review the M\"obius inversion formula, a fundamental concept in combinatorics  \cite{bender1975applications,rota1964foundations}.

A poset $\QA$ is said to be \textbf{locally finite} if for all $p,q\in \QA$ with $p\leq q$, the set $[p,q]:=\{r\in \QA: p\leq r\leq q\}$ is finite. Let $\QA$ be a locally finite poset. The M\"obius function $\mu_{\QA}:\QA\times\QA\rightarrow \Zb$ of $\QA$ is defined\footnote{More precisely, the codomain of $\mu_{\QA}$ is the multiple of $1$ in a specified base ring.}
recursively as

\begin{equation}\label{eq:induction}
    \mu_{\QA}(p,q)=\begin{cases}1,& \mbox{$p=q$,}\\ -\sum\limits_{p\leq r< q}\mu_{\QA}(p,r),& \mbox{$p<q$,} \\ 0,& \mbox{otherwise.} \end{cases}
\end{equation}

For $q_0\in \QA$, consider the principal ideal $q_0^{\downarrow}:=\{q\in\QA: q\leq q_0\}$. Note that if we assume that $q^{\downarrow}$ is finite for \emph{all} $q\in \QA$, then $\QA$ must be locally finite. To see this, note that, for any $p,q\in \QA$ with $p\leq q$, the set $[p,q]$ is a subset of the finite set $q^{\downarrow}$.

\begin{theorem}[M\"obius Inversion formula]\label{thm:mobius}
Assume that $q^{\downarrow}$ is finite for all $q\in \QA$. Let $k$ be a field. For any pair of functions $f,g:\QA\rightarrow \F$,   \[g(q)=\sum_{r\leq q} f(r)\ \mbox{for all $q\in \QA$}\]  if and only if 
\[f(q)=\sum_{r\leq q} g(r)\cdot \mu_{\QA}(r,q) \ \mbox{for all } q\in \QA.\]
\end{theorem}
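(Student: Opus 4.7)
The plan is the classical substitute-and-swap proof: substitute the hypothesized formula into the right-hand side of the conclusion, interchange the two nested sums (legitimate because $q^{\downarrow}$ is finite, making all sums finite), and collapse the inner sum using orthogonality. The key input is two identities for the M\"obius function, both holding for all $p\leq q$ in $\Q$:
\begin{equation*}
\sum_{p\leq r\leq q}\mu_{\Q}(p,r) \;=\; \delta_{p,q} \qquad\text{and}\qquad \sum_{p\leq r\leq q}\mu_{\Q}(r,q) \;=\; \delta_{p,q},
\end{equation*}
where $\delta_{p,q}$ is the Kronecker delta. Conceptually these express that $\mu_{\Q}$ is a two-sided convolution inverse of the zeta function $\zeta_{\Q}(p,q):=1$ (for $p\leq q$) in the incidence algebra of $\Q$ over $k$; one identity is used for each direction of the biconditional.

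The first identity is essentially tautological from (\ref{eq:induction}): the $p=q$ case is the first clause of the definition, and the $p<q$ case is the recursive clause rearranged. The second identity is the main point of substance. To obtain it, I would pass to the finite interval $[p,q]=\{r\in\Q:p\leq r\leq q\}$: restricted there, $\zeta_{\Q}$ and $\mu_{\Q}$ become finite upper-triangular matrices with respect to any linear extension of the order, and $\zeta_{\Q}$ has $1$'s on its diagonal and so is invertible. The first identity states that $\mu_{\Q}$ is a right inverse of $\zeta_{\Q}$; by uniqueness of inverses in a finite-dimensional matrix algebra it must then also be a left inverse, which is exactly the second identity.

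With both identities available, each direction of the biconditional is a short calculation. For the forward direction, one substitutes $g(r)=\sum_{s\leq r}f(s)$ into $\sum_{r\leq q}g(r)\,\mu_{\Q}(r,q)$, swaps the order of summation, and applies the second identity to collapse the result to $f(q)$. The converse direction is symmetric and uses the first identity instead. The only real obstacle is the second (left-inverse) identity, since the recursion (\ref{eq:induction}) is phrased asymmetrically in $p$ and $q$, making the first identity immediate but requiring the short extra argument above to upgrade a one-sided inverse to a two-sided one.
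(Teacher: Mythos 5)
Your proposal is correct and is the standard substitute-and-swap proof of M\"obius inversion; the paper states Theorem \ref{thm:mobius} without proof (citing the classical references of Rota and Stanley), and your argument matches the canonical one, including the right way to obtain the second orthogonality identity $\sum_{p\leq r\leq q}\mu_{\Q}(r,q)=\delta_{p,q}$ from the asymmetric recursion (\ref{eq:induction}) by upgrading a one-sided inverse of the unitriangular zeta matrix on the finite interval $[p,q]$ to a two-sided one. The only quibble is cosmetic: with the usual convolution convention $(\alpha*\beta)(p,q)=\sum_{p\le r\le q}\alpha(p,r)\beta(r,q)$, your first identity says $\mu*\zeta=\delta$ (so $\mu$ is a \emph{left} inverse of $\zeta$) and the second says $\zeta*\mu=\delta$, i.e.\ the labels ``left'' and ``right'' are swapped relative to what you wrote, but this does not affect the argument.
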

The function $f$ is called the \emph{M\"obius inversion} of $g$.

One interpretation of the M\"obius inversion formula is that of a discrete analogue of the derivative of a real-valued map in elementary calculus, as explained in the following example:
\begin{example}\label{ex:mobius inversion} 
Let $[m]=\{0,1,\ldots,m\}$ with the usual order. Then, \[\mu_{[m]}(a,b)=\begin{cases}1,& \mbox{$a=b$,}\\ -1,& \mbox{$a=b-1$,} \\ 0,& \mbox{otherwise.} \end{cases}\]
Hence, for any function $g:[m]\rightarrow \Rb$, its M\"obius inversion $f:[m]\rightarrow \Rb$ is given by $f(a)=g(a)-g(a-1)$ for $a\neq 0$ and $f(0)=g(0)$. Hence,  at each point $a\neq 0$, $f(a)$ captures the rate of change of $g$ around that point.
\end{example}

\subsection{Generalized rank invariant and generalized persistence diagrams}\label{sec:generalized rank invariant}
In this section we review the notions of generalized rank invariant and generalized persistence diagram \cite{kim2018generalized,patel2018generalized}.
\begin{framed}
    Throughout this subsection, let $\Pb$ denote a \emph{finite connected} poset  (Definition \ref{def:intervals} \ref{item:interval3}).
\end{framed}
 Consider any $\Pb$-module $M$. Then $M$ admits a limit and a colimit of $M$: $\varprojlim M=(L, (\pi_p:L\rightarrow M(p))_{p\in \Pb})$ and $\varinjlim M =(C, (\iota_p:M(p)\rightarrow C)_{p\in \Pb})$; 
see the appendix for a  review of the definitions of limits and colimits (Definitions \ref{def:limit} and \ref{def:colimit}). 
This implies that, for every $p\leq q$ in $\Pb$, \[M(p\leq q)\circ \pi_p =\pi_q\ \ \mbox{and }\  \iota_q\circ M(p\leq q) =\iota_p.\ \]  Since $\Pb$ is connected, these equalities imply that $\iota_p \circ \pi_p=\iota_q \circ \pi_q:L\rightarrow C$ for any $p,q\in \Pb$. In words, the composition $\iota_p \circ \pi_p$ is independent of $p$. The \textbf{canonical limit-to-colimit map $\psi_M:\varprojlim M\rightarrow \varinjlim M$} is therefore defined to be \emph{the} linear map $\iota_p\circ \pi_p$ where $p$ is \emph{any} point in $\Pb$.

\begin{definition}[\cite{kim2018generalized}]\label{def:generalized rank} The \textbf{rank} of $M:\Pb\rightarrow \vect$ is defined as the rank of the canonical limit-to-colimit map $\psi_M:\varprojlim M \rightarrow \varinjlim M$.
\end{definition}

The rank of $M:\Pb\rightarrow \vect$ counts the multiplicity of the fully supported interval module $V_{\Pb}$ in a direct sum decomposition of $M$ into indecomposable modules:

\begin{theorem}[{\cite[Lemma 3.1]{chambers2018persistent}}]
\label{thm:rkequalsintervals}
For any $M:\Pb\rightarrow \vect$, the rank of $M$ is equal to $\mult(\Pb,M)$.
\end{theorem}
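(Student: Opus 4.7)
The plan is to decompose $M$ via Theorem \ref{thm:direct sum decomposable} into indecomposable summands $M\cong\bigoplus_{i=1}^n M_i$ and reduce the claim to a dichotomy for indecomposables: $\rank(M_i)=1$ if $M_i\cong V_{\Pb}$ and $\rank(M_i)=0$ otherwise. Additivity of the rank under finite direct sums is immediate because both $\varprojlim$ and $\varinjlim$ preserve finite direct sums in $\vect$, so the canonical map $\varprojlim(M_1\oplus M_2)\rightarrow\varinjlim(M_1\oplus M_2)$ is itself the direct sum of the two canonical maps, giving $\rank(M_1\oplus M_2)=\rank(M_1)+\rank(M_2)$. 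Once the dichotomy is established, summing yields $\rank(M)=\#\{i:M_i\cong V_{\Pb}\}=\mult(V_{\Pb},M)$.

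That $\rank(V_{\Pb})=1$ is direct: since all structure maps of $V_{\Pb}$ are identities on $\F$ and $\Pb$ is connected, every compatible tuple in $\varprojlim V_{\Pb}\subseteq\prod_{p\in\Pb}\F$ must have equal entries, so $\varprojlim V_{\Pb}\cong\F$; dually $\varinjlim V_{\Pb}\cong\F$; and the canonical map is the identity. The core of the argument is the contrapositive of the other half: if $N$ is indecomposable and the canonical map $\phi:\varprojlim N\rightarrow\varinjlim N$ has rank at least $1$, then $N\cong V_{\Pb}$. Choose $\ell\in\varprojlim N$ with $\phi(\ell)\neq 0$, and a linear functional $\psi:\varinjlim N\rightarrow\F$ with $\psi(\phi(\ell))=1$. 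Define $f:V_{\Pb}\rightarrow N$ by $f_p(c):=c\cdot\pi_p(\ell)$ and $g:N\rightarrow V_{\Pb}$ by $g_p(m):=\psi(\iota_p(m))$; naturality of $f$ uses the cone relation $N(p\leq q)\circ\pi_p=\pi_q$, and naturality of $g$ uses the cocone relation $\iota_q\circ N(p\leq q)=\iota_p$ combined with $\varphi_{V_{\Pb}}(p,q)=\id_{\F}$. Then $(g\circ f)_p(c)=c\cdot\psi(\iota_p\pi_p(\ell))=c\cdot\psi(\phi(\ell))=c$, so $g\circ f=\id_{V_{\Pb}}$; this exhibits $V_{\Pb}$ as a direct summand of $N$, and indecomposability of $N$ forces $N\cong V_{\Pb}$.

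The main obstacle is the construction of the splitting pair $(f,g)$. The key insight is that a single nonzero pairing $\psi(\phi(\ell))$ between the limit and the colimit is enough to manufacture two natural transformations composing to the identity on $V_{\Pb}$, precisely because the structure maps of $V_{\Pb}$ are all identities, so the naturality squares for $f$ and $g$ reduce to the cone and cocone equations already supplied by $\varprojlim N$ and $\varinjlim N$. Everything else in the proof is bookkeeping via Krull--Schmidt and the additivity of rank.
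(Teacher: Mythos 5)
Your proof is correct. The paper itself does not prove this statement but imports it from \cite{chambers2018persistent}, and your argument is essentially the standard one for this fact: additivity of the rank of the canonical map $\varprojlim M\rightarrow\varinjlim M$ over the Krull--Schmidt decomposition, the computation $\rank(V_{\Pb})=1$ using connectedness of $\Pb$, and the splitting $g\circ f=\id_{V_{\Pb}}$ built from a nonzero pairing between limit and colimit, which forces an indecomposable of positive rank to be isomorphic to $V_{\Pb}$. All three steps check out, including the naturality verifications for $f$ and $g$ via the cone and cocone relations.
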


Let $p,q\in \Pb$. We say that $p$ \emph{covers} $q$ and write $q\triangleleft p$ if $q< p$ and there is no $r\in \Pb$ such that $q<r<p$.

A  subposet $I\subseteq \Pb$  is said to be \textbf{path-connected in $\Pb$} if for any $p\neq q$ in $I$, there exists a sequence $p=p_0,p_1,\ldots,p_n=q$ in $I$ such that either $p_i\triangleleft p_{i+1}$ or $p_{i+1}\triangleleft p_i$ in $\Pb$ for $i=0,\ldots,n-1$.
For example, the set $\{0,2\}$ is a \emph{connected} (Definition \ref{def:intervals} \ref{item:interval3}) subposet of $\{0,1,2\}$ equipped with the usual order, but is not path-connected in $\{0,1,2\}$.

By $\con(\Pb)$ we denote the poset of all path-connected subposets of $\Pb$ that is ordered by inclusions. We remark that, since $\Pb$ is finite, $\con(\Pb)$ is finite. For example, assume that $\Pb$ is the zigzag poset $\{\bullet_1< \bullet_2 > \bullet_3\}$. Then, $\con(\Pb)$ consists of the six elements: $\{\bullet_1\},\{\bullet_2\},\{\bullet_3\},\{\bullet_1,\bullet_2\},\{\bullet_2,\bullet_3\}$,and $\{\bullet_1,\bullet_2,\bullet_3\}$. All of these are also intervals of  $\{\bullet_1< \bullet_2 > \bullet_3\}$, i.e. $\inter(\Pb)=\con(\Pb)$ (Definition \ref{def:intervals}). In general, $\inter(\Pb)$ is a subposet of $\con(\Pb)$.

\begin{definition}\label{def:generalized rk inv} 
The \textbf{generalized rank invariant} of $M:\Pb\rightarrow \vect$ is the function \[\rk(M):\con (\Pb)\rightarrow \Zplus\]
which maps $I\in\con(\Pb)$ to the rank of the restriction $M\vert{}_I$ of $M$.
\end{definition}

In fact, in order to define the generalized rank invariant, $\Pb$ does not need to be finite \cite[Section 3]{kim2018generalized}. However, for this work, it suffices to consider the case when $\Pb$ is finite. 

\begin{remark}\label{rem:rank invariant properties} 
Let $I\in \con(\Pb)$. For any $\Pb$-module $M$, the following hold:
\begin{enumerate}[label=(\roman*)] 
    \item By Theorem \ref{thm:rkequalsintervals}, if there exists $p\in I$ such that $M(p)=0$, then $\rk(M)(I)=0$.\label{item:rank invariant properties2}
    \item Let $I,J\in \con(\Pb)$ with $J\supseteq I$. Then $\rk(M)(J)\leq \rk(M)(I)$, i.e. $\rk(M)$ is order-reversing. This is because the canonical limit-to-colimit map $\varprojlim M\vert{}_{I}\rightarrow \varinjlim M\vert{}_{I}$ is a factor of the canonical limit-to-colimit map $\varprojlim M\vert{}_{J}\rightarrow \varinjlim M\vert{}_{J}$ \cite[Proposition 3.7]{kim2018generalized}.  This monotonicity implies that if $\rk(M)(I)=0$, then  $\rk(M)(J)=0$.\label{item:rank invariant properties1}
\end{enumerate}
\end{remark}

The following is a  corollary of Theorem \ref{thm:rkequalsintervals}.

\begin{proposition}[{\cite[Proposition 3.17]{kim2018generalized}}]\label{prop:rk}
Let $M:\Pb\rightarrow \vect$ be interval decomposable. Then for any $I\in \con(\Pb)$,  
\[\rk(M)(I)=\sum_{\substack{J\supseteq I\\ J\in\inter(\Pb)}} \mult(J,M).\]
In words, $\rk(M)(I)$ equals the total multiplicity of intervals $J$ in $\barc(M)$ that contain $I$.
\end{proposition}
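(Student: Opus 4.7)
My plan is to leverage the direct-sum decomposition of $M$ together with the fact that limits, colimits, and ranks all behave additively over finite direct sums.

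First, since $M$ is interval decomposable, we may write $M \cong \bigoplus_{J \in \barc(M)} V_J$. Because $\Pb$ is finite and each $M(p)$ is finite dimensional, $\barc(M)$ is a finite multiset. Restricting to any $I \in \con(\Pb)$ yields $M|_I \cong \bigoplus_{J \in \barc(M)} (V_J)|_I$. Since limits and colimits in $\vect$ commute with finite direct sums, the canonical map $\varprojlim M|_I \rightarrow \varinjlim M|_I$ splits as a direct sum of the canonical maps $\varprojlim (V_J)|_I \rightarrow \varinjlim (V_J)|_I$, and the rank of a direct sum of linear maps equals the sum of their ranks. Hence
\[
\rk(M|_I) \;=\; \sum_{J \in \barc(M)} \rk\bigl((V_J)|_I\bigr).
\]

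Next, I would compute $\rk((V_J)|_I)$ by splitting into two cases. If $I \subseteq J$, then $(V_J)|_I$ is the fully supported interval module on $I$, viewed as a functor $I \rightarrow \vect$, so applying Theorem \ref{thm:rkequalsintervals} with the poset $I$ in place of $\Pb$ gives $\rk((V_J)|_I) = \mult(V_I, V_I) = 1$. If $I \not\subseteq J$, pick any $p \in I \setminus J$; then $(V_J)|_I(p) = 0$. Since the canonical map $\varprojlim (V_J)|_I \rightarrow \varinjlim (V_J)|_I$ factors as $\iota_p \circ \pi_p$ for this $p$ (as recalled just before Definition \ref{def:generalized rank}), and $\pi_p$ lands in $(V_J)|_I(p) = 0$, the canonical map is zero, so $\rk((V_J)|_I) = 0$.

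Combining, only the summands indexed by $J \supseteq I$ contribute, and each contributes exactly $1$. Therefore $\rk(M|_I)$ equals the total multiplicity of intervals $J \in \barc(M)$ that contain $I$, i.e.\ $\#\lmulti J \in \barc(M) : J \supseteq I \rmulti$, as claimed. I do not foresee any serious obstacle; the main thing to double-check carefully is that the limit and colimit of a finite direct sum of functors $I \rightarrow \vect$ indeed decompose componentwise, and that restriction along the inclusion $I \hookrightarrow \Pb$ commutes with direct sums, both of which are routine from the universal properties.
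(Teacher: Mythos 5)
Your proof is correct and is essentially the argument the paper intends: the paper states this proposition as a direct corollary of Theorem \ref{thm:rkequalsintervals}, and your decomposition into interval summands, additivity of the rank of the canonical map over finite direct sums, and the case analysis $\rk((V_J)|_I)=1$ if $I\subseteq J$ (via Theorem \ref{thm:rkequalsintervals} applied to the finite connected poset $I$) versus $0$ otherwise (since the canonical map factors through a zero space) is exactly the expected way to fill in the details.
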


For any poset $\QA$, let $\QA^\mathrm{op}$ denote the \textbf{opposite poset} of $\QA$, i.e. $p\leq q$ in $\QA$ if and only if $q\leq p$ in $\QA^\mathrm{op}$. By virtue of Theorem \ref{thm:mobius} we have:

\begin{definition}\label{def:GPD} Let $\Pb$ be a finite connected poset. The \textbf{generalized persistence diagram} of $M:\Pb\rightarrow \vect$ is the unique function $\dgm(M):\con(\Pb)\rightarrow \Zb$ that satisfies, for any $I\in \con(\Pb)$, 
\begin{equation}\label{eq:GPD0}
    \rk(M)(I)=\sum_{\substack{J\supseteq I\\ J\in\con(\Pb)}}\dgm(M)(J).
\end{equation}
In other words, $\dgm(M)$ is the M\"obius inversion of $\rk (M)$ over $\conop (\Pb)$. That is, for $I\in\con(\Pb)$, 
\begin{equation}\label{eq:GPD}
    \dgm(M)(I):= \sum\limits_{\substack{J\supseteq I\\ J\in\con(\Pb)}} \mu_{\conop(\Pb)}(J,I)\cdot \rk(M)(J).
\end{equation}
\end{definition}
The function $\mu_{\conop(\Pb)}$ has been precisely computed in \cite[Section 3]{kim2018generalized}. 

Next, we restrict the domain of $\rk(M)$ and $\dgm(M)$ to the collection $\inter(\Pb)$ of all intervals of $\Pb$. For $M:\Pb \rightarrow \vect$, let $\intrk(M)$ denote the restriction of $\rk(M):\con(\Pb)\rightarrow \Zplus$ to $\inter(\Pb)$. We consider the M\"obius inversion of $\intrk(M)$ over the poset $\inter^{\mathrm{op}} (\Pb)$. Again by virtue of Theorem \ref{thm:mobius} we have:

\begin{definition}\label{def:IGPD} 
Let $\Pb$ be a finite connected poset. The \textbf{$\inter$-generalized persistence diagram of $M:\Pb\rightarrow \vect$} is the unique function $\intdgm(M):\inter(\Pb)\rightarrow \Zb$ that satisfies, for any $I\in \inter(\Pb)$, \[\intrk(M)(I)=\sum_{\substack{J\supseteq I\\ J\in\inter(\Pb)}}\intdgm(M)(J).\]
In other words, by Theorem \ref{thm:mobius}, $\intdgm(M)$ is the M\"obius inversion of $\intrk (M)$ over $\inter^\mathrm{op} (\Pb)$, i.e. for $I\in\inter(\Pb)$, \begin{equation}\label{eq:IGPD}\intdgm(M)(I):= \sum\limits_{\substack{J\supseteq I\\ J\in\inter(\Pb)}} \mu_{\inter^\mathrm{op}(\Pb)}(J,I)\cdot \intrk(M)(J).
\end{equation}
\end{definition}

Recall from Example \ref{ex:mobius inversion} that, for $m\in \Zplus$, $[m]$ is defined as the set $\{0<1< \cdots< m\}$.

\begin{remark}\label{rem:asashiba}
In Definition \ref{def:IGPD}, let $\Pb$ be the finite product poset $[m]\times [n]$ for any $m,n\in \N\cup\{0\}$. Then, $\intdgm(M)$ is equivalent to the \emph{interval decomposable approximation} $\delta^{\text{tot}}(M)$ given in \cite{asashiba2019approximation}; this is a direct corollary of Theorem \ref{thm:rkequalsintervals}. The M\"obius function  $\mu_{\interop([m]\times[n])}$ has been precisely computed in \cite{asashiba2019approximation}, which leads to Theorem \ref{thm:mobius inversion by asashiba} below. 
\end{remark}

Although we do not require $M:\Pb\rightarrow\vect$ to be interval decomposable in order to define $\dgm(M)$ or $\intdgm(M)$, these two diagrams generalize the notion of barcode (Definition \ref{def:barcode}): 

\begin{theorem}\label{thm:GPD is barcode proxy}
Let $M:\Pb\rightarrow\vect$ be interval decomposable. Then we have:
\begin{align}
    \dgm(M)(I)&=\begin{cases}\mult(I, M)& I\in\inter(\Pb)\\0 &I\in\con(\Pb)\setminus \inter(\Pb),\ \ \mbox{and} \end{cases} \label{eq:dgm and intdgm0}\\ \intdgm(M)(I)&=\mult(I, M)\mbox{ for all $I\in \inter(\Pb)$.} \label{eq:dgm and intdgm}
\end{align}
\end{theorem}
The equality given in Equation (\ref{eq:dgm and intdgm0}) was first proved in  \cite[Theorem 3.14]{kim2018generalized}, but we include a proof here for completeness.
\begin{proof}
By Proposition \ref{prop:rk}, we have that $\displaystyle \rk(M)(I)=\sum_{\substack{J\supseteq I\\ J\in\inter(\Pb)}} \mult(I,M)$. By the uniqueness of $\dgm(M)$ in Definition \ref{def:IGPD}, $\dgm(M)(I)= \mult(I,M)$ for all $I\in \inter(\Pb)$ and  $\dgm(M)(I)=0$ for $I\in \con(\Pb)\setminus \inter(\Pb)$. By a similar argument, we have that $\intdgm(M)(I)=\mult(I, M)\mbox{ for all $I\in \inter(\Pb)$.}$  
\end{proof}

In the restricted case when $\Pb=[m]\times [n]$, the equality in Equation (\ref{eq:dgm and intdgm}) has been also independently proved in  \cite[Theorem 5.10]{asashiba2019approximation}. 

Theorem \ref{thm:GPD is barcode proxy} implies that both $\dgm(M)$ and $\intdgm(M)$ are able to completely determine the isomorphism type of an interval decomposable persistence module $M$ (which also implies that each of $\rk(M)$ and $\intrk(M)$ is strong enough to determine the isomorphism type of $M$). However, in general, the generalized persistence diagram $\dgm(M)$ is more discriminative than the $\inter$-generalized persistence diagram $\intdgm(M)$; see Example \ref{ex:GPD_vs_IGPD} in the appendix.

\medskip
In Section \ref{sec:BettiNumbers}, the case when $\Pb$ is a zigzag poset of length $3$ will be useful.

\begin{example}[{\cite[Section 3.2,2]{kim2018generalized}}]\label{ex:zigzag mobius inversion} Assume that $\Pb$ is any zigzag poset of length 3, i.e. $\bullet_1\leftrightarrow \bullet_2 \leftrightarrow \bullet_3$ where $\leftrightarrow$ stands for either $\leq$ or $\geq$.
Then, $\dgm(M)$ is computed as follows:
\begin{align*}
    \dgm(M)(\{\bullet_1\})&=\rk(M)(\{\bullet_1\})-\rk(M)(\{\bullet_1,\bullet_2\}),
\\
    \dgm(M)(\{\bullet_2\})&=\rk(M)(\{\bullet_2\})-\rk(M)(\{\bullet_1,\bullet_2\})-\rk(M)(\{\bullet_2,\bullet_3\})+\rk(M)(\{\bullet_1,\bullet_2,\bullet_3\}),
    \\
    \dgm(M)(\{\bullet_3\})&=\rk(M)(\{\bullet_3\})-\rk(M)(\{\bullet_2,\bullet_3\}),
\\
    \dgm(M)(\{\bullet_1,\bullet_2\})&=\rk(M)(\{\bullet_1,\bullet_2\})-\rk(M)(\{\bullet_1,\bullet_2,\bullet_3\}),
\\
    \dgm(M)(\{\bullet_2,\bullet_3\})&=\rk(M)(\{\bullet_2,\bullet_3\})-\rk(M)(\{\bullet_1,\bullet_2,\bullet_3\}),
\\
     \dgm(M)(\{\bullet_1,\bullet_2,\bullet_3\})&=\rk(M)(\{\bullet_1,\bullet_2,\bullet_3\}).
\end{align*}

Since $M$ is a zigzag module, it is interval decomposable (Theorem \ref{thm:interval decomposable}). Thus, we have $\dgm(M)(I)=\mult(I,M)$ for $I\in \con(\Pb)$,  the multiplicity of $I$ in $\barc(M)$. Since $\con(\Pb)=\inter(\Pb)$, each $\dgm(M)$ above can be replaced by $\intdgm(M)$.
\end{example}

\subsection{$\inter$-Generalized persistence diagram of an $([m]\times[n])$-module.}\label{sec:m times n module}

 In this section we review
 a formula of the $\inter$-generalized persistence diagram of an $([m]\times[n])$-module for any fixed integers $m,n\geq 0$. 
 
 Let us consider the poset  $\inter([m]\times [n])$. Then, given any two distinct $I,J\in \inter([m]\times [n])$, we say that $J$ \emph{covers} $I$ if $J\supsetneq I$ and there is no interval $K$ such that $J\supsetneq K\supsetneq I$.  For $I\in \inter([m]\times [n])$, let us define $\cov(I)$ as the collection of all $J\in \inter([m]\times [n])$ that cover $I$. Given any nonempty $S\subseteq \inter([m]\times [n])$,  by $\bigvee S$, we denote the smallest interval $J$ that contains all $I\in S$.

The following theorem is established by invoking Remark \ref{rem:asashiba} and finding an explicit formula for the M\"obius function $\mu_{\inter^\mathrm{op}(\Pb)}$  that appears in Equation (\ref{eq:IGPD}) with $\Pb=[m]\times [n]$.

\begin{theorem}[{\cite[Theorem 5.3]{asashiba2019approximation}}]\label{thm:mobius inversion by asashiba} For any $([m]\times [n])$-module $M$,
\begin{equation}\label{eq:asashiba}
    \intdgm(M)(I)=\intrk(M)(I)+\sum_{\substack{S\subseteq \cov(I)\\ S\neq \emptyset}}(-1)^{\abs{S}}\intrk(M)\left(\bigvee S\right).
\end{equation}
\end{theorem}

\begin{figure}
    \centering
    \includegraphics[width=\textwidth]{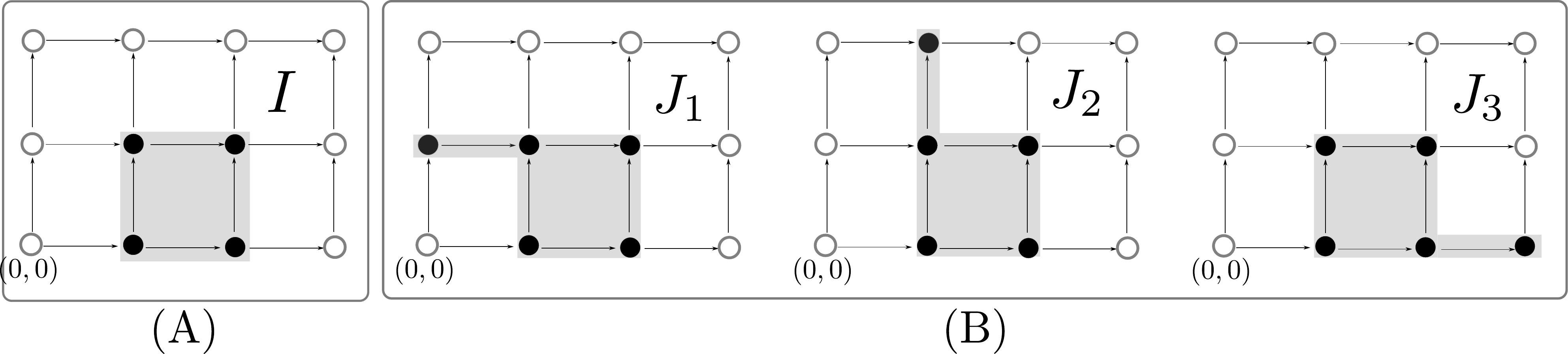}
    \caption{Illustrations for Example \ref{ex:dgm computation}.}
    \label{fig:cover}
\end{figure}

\begin{example}\label{ex:dgm computation}Let $I\in  \inter([3]\times[2])$ depicted as in Fig.~\ref{fig:cover} (A). Note that $\cov(I)=\{J_1,J_2,J_3\}$ where $J_1,J_2$ and $J_3$ are depicted as in Fig.~\ref{fig:cover} (B). For any $([3]\times[2])$-module $M$, we have:
\begin{equation*}
   \begin{aligned}
    \intdgm(M)(I)&=\intrk(M)(I)-\sum_{i=1}^3 \intrk(M)(J_i)+\sum_{i\neq j}\intrk(M)\left(\bigvee\{J_i,J_j\}\right)\\
    &-\intrk(M)\left(\bigvee\{J_1,J_2,J_3\}\right),
\end{aligned} 
\end{equation*}
where $
\bigvee\{J_1,J_2\}=J_1\cup J_2\cup\{(0,2)\}, \ \ \bigvee\{J_1,J_3\}=J_1\cup J_3, \ \ \bigvee\{J_2,J_3\}=J_2\cup J_3$,  and $\bigvee\{J_1,J_2,J_3\}=J_1\cup J_2\cup J_3\cup\{(0,2)\}$.
\end{example}

The following remark will be useful in the next section.

\begin{remark}\label{rem:rank invariant properties2} 
Let $M$ be an $([m]\times [n])$-module and let $I\in \inter([m]\times [n])$. By Remark \ref{rem:rank invariant properties} \ref{item:rank invariant properties1} and Equation (\ref{eq:asashiba}), if $\intrk(M)(I)=0$, then $\intdgm(M)(I)=0$.
\end{remark}

\section{Extracting the bigraded Betti numbers from the generalized persistence diagram}\label{sec:BettiNumbers}

In this section we aim at establishing Theorem \ref{thm:pictorial formula}, as a generalization of Theorem \ref{thm:bigraded bettis for an interval2}. 

Let $M$ be a finitely generated $\Zb^2$-module.\footnote{Main results in this section (which are Proposition \ref{prop:GPD determines Betti} and Theorem \ref{thm:pictorial formula}) also hold for finitely presented $\Rb^2$-modules upto rescaling parameters \cite{carlsson2009theory,lesnick2015interactive}.} We may assume that $M(p)=0$ for $p\not\geq (0,0)$. Then, all algebraic information of $M$ can be recovered from the restricted module $M':=M\vert{}_{[m]\times [n]}$ for some large enough positive integers $m$ and $n$. We will show that the generalized persistence diagram of $M'$ determines the bigraded Betti numbers of $M$.

\begin{definition} A given $\Zb^2$-module $M$ is said to be \textbf{encoded} by $M':[m]\times [n]\rightarrow \vect$ if the following hold:
\begin{itemize}
    \item If $p\in \Zb^2$ is \emph{not} greater than equal to $(0,0)$, then $M(p)=0$.
    \item For $(0,0)\leq p$ in $\Zb^2$, we have that $M(p)=M'(q)$ where $q$ is the maximal element of $[m]\times [n]$ such that $q\leq p$ (we write $q=\lfloor p \rfloor_{m,n}$ in this case).
    \item 
For $(0,0)\leq p_1\leq p_2$ in $\Zb^2$, the  map $\varphi_M(p_1,p_2)$  is equal to $\varphi_{M'}(\lfloor p_1\rfloor_{m,n} \leq \lfloor p_2 \rfloor_{m,n})$.
\end{itemize}

\end{definition}
The $\Zb^2$-module $M$ described above is clearly finitely generated and its restriction $M\vert{}_{[m]\times [n]}$ coincides with $M'$.
The following proposition is the key to obtain Theorem \ref{thm:pictorial formula}.
Let $e_1:=(1,0)$ and $e_2:=(0,1)$ in $\Zb^2$. 

\begin{proposition}\label{prop:GPD determines Betti}
Assume that a $\Zb^2$-module $M$ is encoded by $M':[m]\times [n]\rightarrow \vect$.
Then, $\dgm(M')$ determines the bigraded Betti numbers of $M$ via the following formulas: For $p\notin [m+1]\times[n+1]$, we have $\beta_j(M)(p)=0$, $j=0,1,2$. For $p\in [m+1]\times [n+1]$, we have:

\begin{equation}\label{eq:Betti formulas}
\beta_{j}(M)(p)= \begin{cases} 
      \hspace{.8cm}\sum\limits_{\mathclap{\substack{J\ni p\\ J\niton p-e_1, p-e_2}
      }} \dgm(M')(J), & j=0  \\
    \hspace{1cm}\sum\limits_{\mathclap{\substack{J\ni p-e_1\\ J\niton p-e_1-e_2, p-e_2, p}}} \dgm(M')(J)  \hspace{.3cm}+  \hspace{.3cm} \sum\limits_{\mathclap{\substack{J \ni p-e_2\\ J\niton p-e_1-e_2, p-e_1, p}}} \dgm(M')(J) \hspace{.3cm}+  \hspace{.3cm}  \sum\limits_{\mathclap{\substack{J\ni p-e_1-e_2, p-e_1, p-e_2\\ J\niton p}}} \dgm(M')(J)  \\
      \hspace{0.8cm} + 2\hspace{.1cm} \sum\limits_{\mathclap{\substack{J\ni p-e_1, p-e_2\\ J\niton p-e_1-e_2, p}}} \dgm(M')(J)\hspace{.3cm}+\hspace{.3cm}\sum\limits_{\mathclap{\substack{J\ni p-e_1, p-e_2, p\\ J\niton p-e_1-e_2}}} \dgm(M')(J) \hspace{.3cm} -\hspace{.3cm}\sum\limits_{\mathclap{\substack{J\ni p-e_1-e_2, p-e_1, p\\ J\niton p-e_2}}} \dgm(M')(J) \\ \hspace{0.8cm} -\hspace{.3cm}\sum\limits_{\mathclap{\substack{J\ni p-e_1-e_2, p-e_2, p\\ J\niton p-e_1}}} \dgm(M')(J), &j=1 \\
      \hspace{.8cm}\sum\limits_{\mathclap{\substack{J\ni p-e_1-e_2\\ J\niton p-e_1, p-e_2}}} \dgm(M')(J),  & j=2,
   \end{cases}\end{equation} 
  where each sum is taken over $J\in \con([m]\times[n])$.\footnote{For example, when $j=0$, the sum is taken over every $J\in \con([m]\times[n])$ that contains $p$ and does not contain $p-e_1$ and $p-e_2$.}  Moreover, each $\dgm(M')$ above can be replaced by $\intdgm(M')$ where each sum is taken over $J\in\inter([m]\times [n])$.
\end{proposition}

One direct consequence of this proposition is that if $p\in[m+1]\times[n+1]$ is outside of $[m]\times [n]$, then $\beta_0(M)(p)=0$: no $J\in \con([m]\times [n])$ can include $p$ and thus the sum $\sum\limits_{\mathclap{\substack{J\ni p\\ J\niton p-e_1, p-e_2}
      }} \dgm(M')(J)$ is zero.
      We defer the proof of Proposition \ref{prop:GPD determines Betti} to the end of this section. 

\begin{remark}\label{rem: intervals}
In Proposition \ref{prop:GPD determines Betti}, the equation for $\beta_1(M)$ with respect to $\intdgm(M')$ can be  further simplified by removing the fourth, sixth, and seventh sums, i.e. 
\begin{multline*}
\beta_1(M)\\=\hspace{0.3cm}\sum\limits_{\mathclap{\substack{J\ni p-e_1\\ J\niton p-e_1-e_2, p-e_2, p}}} \intdgm(M')(J)  \hspace{.3cm}+  \hspace{.3cm} \sum\limits_{\mathclap{\substack{J \ni p-e_2\\ J\niton p-e_1-e_2, p-e_1, p}}} \intdgm(M')(J) \hspace{.3cm}+  \hspace{.3cm}  \sum\limits_{\mathclap{\substack{J\ni p-e_1-e_2, p-e_1, p-e_2\\ J\niton p}}} \intdgm(M')(J) 
+ \hspace{.3cm}  \sum\limits_{\mathclap{\substack{J\ni p-e_1, p-e_2, p\\ J\niton p-e_1-e_2}}} \intdgm(M')(J).
\end{multline*}
This is because the connected sets $J$ (Definition \ref{def:intervals} \ref{item:interval3}) 
over which the fourth, sixth, and seventh sums are taken cannot be intervals of $[m]\times[n]$ (those connected sets $J$
cannot satisfy Definition \ref{def:intervals} \ref{item:convexity}). Similarly, if $\dgm(M')(J)=0$ for all non-intervals $J\in \con([m]\times[n])$, then the fourth, sixth, and seventh sums can be eliminated in the equation for $\beta_1(M)$.\footnote{We remark that, in general, there can exist a non-interval $J\in \con([m]\times[n])$ where $\dgm(M')(J)\neq 0$; see Example \ref{ex:GPD_vs_IGPD} in the appendix.}
\end{remark}

By virtue of Remark \ref{rem: intervals}, Proposition \ref{prop:GPD determines Betti} admits a simple pictorial interpretation which generalizes Remark \ref{rem:bigraded bettis for an interval} \ref{item:bigraded bettis for an interval1} and Theorem \ref{thm:bigraded bettis for an interval2}. To state this interpretation, we introduce the following notation.

\begin{notation}\label{not:blow-up}
Given any $I\in \con(\Zb^2)$, let $I^+\subset \Rb^2$ be the corresponding region (cf. equation (\ref{eq:region})). Then $I^+$ admits the 3 types of corner points  depicted in Figure \ref{fig:corner points in general}. For $j=0,1,2$, we define functions $\tau_j(I^+):\Zb^2\rightarrow \{0,1,2\}$ as follows: for $j=0,2$, let $\tau_j(I^+)(p):=1$ if $p$ is a $j^\mathrm{th}$ type corner point of $I^+$, and $0$ otherwise. For $j=1$, let
\[\tau_1(I^+)(p):=\begin{cases}2, &\mbox{$p$ is a $1^\mathrm{st}$-type corner point of $I^+$ with multiplicity 2}\\1, &\mbox{$p$ is a $1^\mathrm{st}$-type corner point of $I^+$ with multiplicity 1}\\0,&\mbox{otherwise.}
\end{cases}\]
\end{notation}

\begin{figure}
    \centering
    \includegraphics[width=0.55\textwidth]{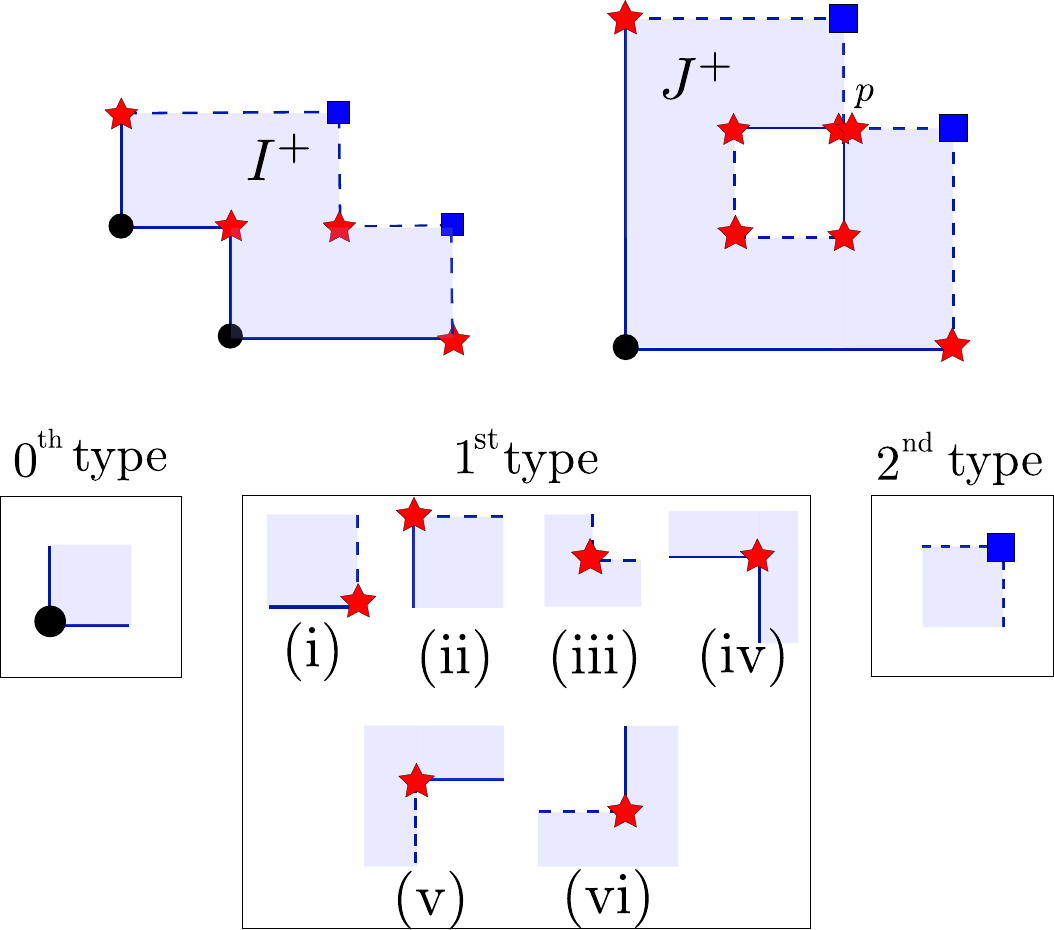}
    \caption{
    The three different types of corner points in $I^+\subset \Rb^2$ and $J^+\subset \Rb^2$. 
    Note that two different $1^\mathrm{st}$ type corner points of $J$ are located at $p$. See Definition \ref{def:types of corners} for a rigorous description of each of the three types of corner points.}
    \label{fig:corner points in general}
\end{figure}

Our main theorem below says that the bigraded Betti numbers of a given $\Zb^2$-module $M$ encoded by an $([m]\times [n])$-module $M'$ can be read off from the corner points of the elements in either of \[\{ I^+\subset\Rb^2: \dgm(M')(I)\neq 0\} \ \mbox{and}\ \{ I^+\subset \Rb^2: \intdgm(M')(I)\neq 0\}.\]

\begin{theorem}\label{thm:pictorial formula}
Assume that a $\Zb^2$-module $M$ is encoded by $M':[m]\times [n]\rightarrow \vect$. Then, for every $j=0,1,2$ and for every $p\in\Zb^2$, we have  
\begin{equation}\label{eq:pictorial}
    \beta_j(M)(p)=\sum\limits_{I\in\con([m]\times[n])}\dgm(M')(I)\times\tau_j(I^+)(p) .
\end{equation}
Also we have:
\begin{equation}\label{eq:pictorial2}
    \beta_j(M)(p)=\sum\limits_{I\in\inter([m]\times[n])}\intdgm(M')(I)\times\tau_j(I^+)(p) .
\end{equation}
\end{theorem}
Notice that, by Theorem \ref{thm:GPD is barcode proxy}, the theorem above is a generalization of Theorem \ref{thm:bigraded bettis for an interval2}. We prove Theorem \ref{thm:pictorial formula} at the end of this section.

\begin{example}\label{ex:pictorial interpretation}
Recall that $[3]\times[2]=\{0,1,2,3\}\times\{0,1,2\}\subset \Zb^2$ and assume that a $\Zb^2$-module $N$ is encoded by the module $N':[3]\times[2]\rightarrow \vect$ depicted in Fig.~\ref{fig:introduction} (A'). Then, Fig.~\ref{fig:introduction} (B') and (C') are explained as follows:  
\begin{itemize}
    \item[(B')] For $I_1,I_2,I_3,I_4\in \inter([3]\times[2])$ in Fig.~\ref{fig:intervals}, we have that $\intdgm(N')(I_i)=1$ for $i=1,2,3$ and $\intdgm(N')(I_4)=-1$ and $\intdgm(N')(J)=0$ for the other $J\in\inter([3]\times[2])$ (more details are provided after this example). 
    \item[(C')]  
    For $i=1,2,3,4$, expand $I_i$ to its corresponding region $I_i^+$ in $\Rb^2$ (cf. Definition \ref{def:corresponding region}).    The corner points of each $I_i^+$ are marked according to their types as described in Fig. \ref{fig:corner points in general}. By Theorem \ref{thm:pictorial formula}, for each $p\in \Zb^2$ and $j=0,1,2$, $\beta_j(N)(p)$ is equal to the number of black dots, red stars, and blue squares at $p$ respectively, where the corner points of the red interval $I_4^+$ negatively contribute to the counting. The net sum  is illustrated in Fig.~\ref{fig:Bettis_onemore}.
\end{itemize}
\end{example}

\begin{figure}
    \centering
    \includegraphics[width=0.8\textwidth]{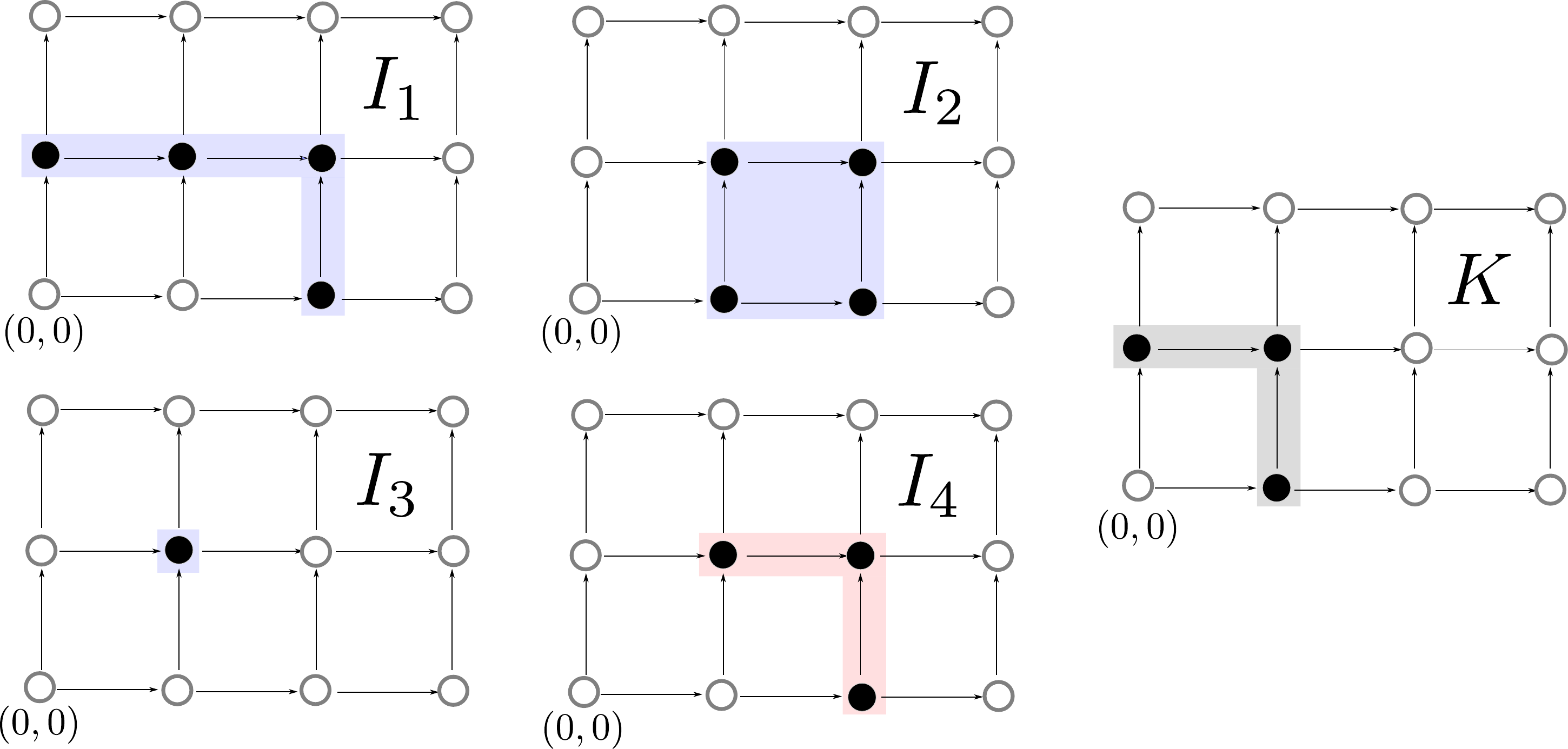}
    \caption{$I_1$, $I_2$, $I_3$, and $I_4$ are the intervals corresponding to Fig.~\ref{fig:introduction} (B').}
    \label{fig:intervals}
\end{figure}

\begin{figure}
    \centering
    \includegraphics[width=0.3\textwidth]{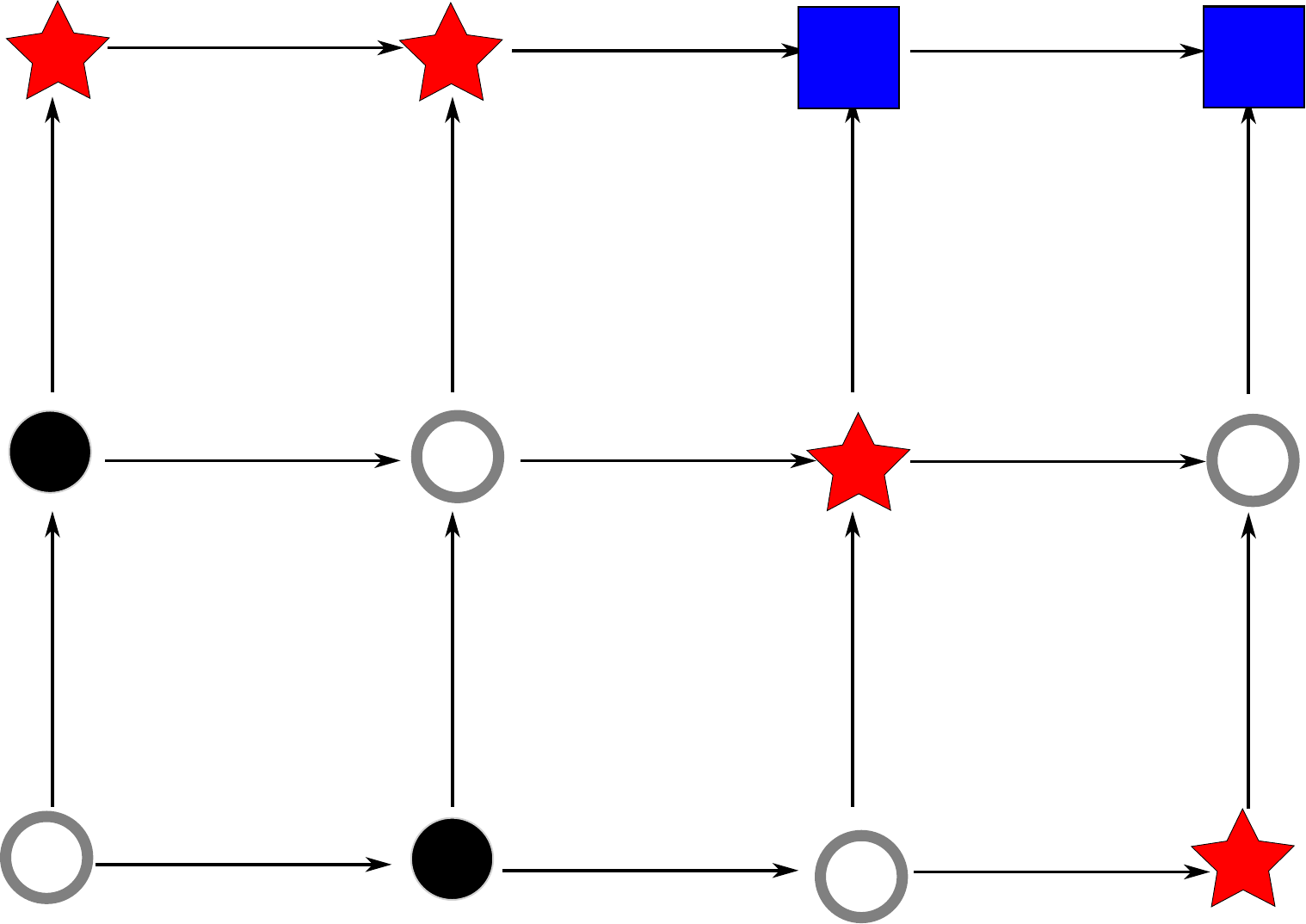}
    \caption{For $N:\Zb^2\rightarrow \vect$ in Example \ref{ex:pictorial interpretation}, a black dot at $p$ indicates $\beta_0(N)(p)=1$, a red star at $p$ indicates $\beta_1(N)(p)=1$, and a blue square at $p$ indicates $\beta_2(N)(p)=1$. For all other $j$ and $p$, $\beta_j(N)(p)=0$.}
    \label{fig:Bettis_onemore}
\end{figure}

\begin{proof}[Details about Example \ref{ex:pictorial interpretation}]
For $I_1,I_2,I_3,I_4\in \inter([3]\times[2])$ in Fig.~\ref{fig:intervals}, we show that $\intdgm(N')(I_i)=1$ for $i=1,2,3$ and $\intdgm(N')(I_4)=-1$ and $\intdgm(N')(J)=0$ for the other $J\in\inter([3]\times[2])$. 

\begin{enumerate}[label=(\roman*)] 
    \item \label{item:computation1}If $J\in \inter([2]\times [3])$ contains any point $p\in [3]\times[2]$ such that $N'(p)=0$, then $\intdgm(N')(J)=0$ by Remarks \ref{rem:rank invariant properties} \ref{item:rank invariant properties2} and \ref{rem:rank invariant properties2}.
    \item \label{item:computation2}Consider $K:=\{(0,1),(1,1),(1,0)\}\in  \inter([3]\times[2])$ that is depicted in Fig.~\ref{fig:intervals}.  We claim that for all $J\supseteq K$, $\intdgm(N')(J)=0$: By Remarks \ref{rem:rank invariant properties} \ref{item:rank invariant properties1} and \ref{rem:rank invariant properties2}, it suffices to show that $\intrk(N')(K)=0$. This follows from Theorem \ref{thm:rkequalsintervals} and the fact that the zigzag module $N'\vert{}_K$ does not admit a summand that is isomorphic to the interval module $V_{K}:K\rightarrow \vect$. An alternative way to prove $\intrk(N')(K)=0$ is to show that $\varprojlim N'\vert{}_K$ is trivial: Note that 
        $\varprojlim N'\vert{}_K\cong(L,\{\pi_p\}_{p\in K})$, where 
    \begin{multline*}
        L= \{(v_1,v_2,v_3)\in N'{(0,1)}\oplus N'{(1,1)}\oplus N'{(1,0)}:\\ \varphi_{N'}((0,1),(1,1))(v_1)=v_2=\varphi_{N'}((1,0),(1,1))(v_3)\}
    \end{multline*}
and $\pi_p:L\rightarrow N'(p)$ are the canonical projections for $p\in K$. Then, we have:
\begin{align*}
    L&=\{(x_1,(x_2,x_3),x_4)\in \F\oplus(\F^2)\oplus\F:x_1=x_2,\ x_3=0,\ x_2=x_3=x_4\}\\&=\{(0,(0,0),0)\}.
\end{align*}

\item We claim that $\intdgm(N')(I_1)=\intdgm(N')(I_2)=1$. Fix any $i\in\{1,2\}$. By invoking Theorem \ref{thm:rkequalsintervals}, one can check that $\intrk(N')(I_i)=\rk(N'\vert{}_{I_i})=1$. 
Let us observe that any interval $J\supsetneq I_i$ must contain either $K$ or a point $p\in[3]\times[2]$ such that $N'(p)=0$. Hence, by Remarks \ref{rem:rank invariant properties} \ref{item:rank invariant properties2} and \ref{item:rank invariant properties1}, we have that $\intrk(N')(J)=0$. Therefore, by Theorem \ref{thm:mobius inversion by asashiba}, we have: \begin{multline*}\intdgm(N')(I_i)=\intrk(N')(I_i)+\sum_{\substack{S\subseteq \cov(I_i)\\ S\neq \emptyset}}(-1)^{\abs{S}}\intrk(N')\left(\bigvee S\right)\\=1+\sum_{\substack{S\subseteq \cov(I_i)\\ S\neq \emptyset}}(-1)^{\abs{S}}\cdot 0=1.
\end{multline*}
\end{enumerate}
Similarly, one can compute $\intdgm(N')(I_3)=1$, $\intdgm(N')(I_4)=-1$, and $\intdgm(N')(L)=0$ for any $L\in \inter([3]\times[2])$ that has not been considered so far.
\end{proof}

\paragraph{Proofs of Proposition \ref{prop:GPD determines Betti} and Theorem \ref{thm:pictorial formula}.}
Lemma \ref{lem:Combinatorial Betti Formula} below will be used in the proof of Proposition \ref{prop:GPD determines Betti}. Let $M$ be any finitely generated $\Zb^2$-module.
For any $p\in\Zb^2$, consider the subposet $\{p-e_1\leq p \geq  p-e_2\}$
where $e_1=(1,0)$ and $e_2=(0,1)$. The restriction of $M$ to $\{p-e_1\leq p \geq p-e_2\}$ is a zigzag module and thus admits a barcode (Theorem \ref{thm:interval decomposable}). Let $n_p$ be the 
multiplicity of $\{p\}$
in the barcode of $M\vert{}_{\{p-e_1\leq p \geq p-e_2\}}$. Similarly,  we also consider the subposet $\{p+e_1\geq p \leq  p+e_2\}$ 
and define $m_p$ to be the multiplicity of $\{p\}$ in the barcode of $M\vert{}_{\{p+e_1 \geq p \leq p+e_2\}}$.

\begin{lemma}[\cite{moore2020subquivers,moore2022structure}]\label{lem:Combinatorial Betti Formula} 
Given any finitely generated $\Zb^2$-module $M$, for every $p\in\Zb^2$, we have:
\begin{equation}\beta_{j}(M)(p)= \begin{cases} 
      n_p & j=0 \\
      n_p-\dim(M(p))+\dim(M(p-e_1))+\dim(M(p-e_2))\\
      \hspace{2cm} -\dim(M(p-e_1-e_2))+m_{p-e_1-e_2} & j=1 \\
      m_{p-e_1-e_2} & j=2.
  \end{cases}\end{equation}
\end{lemma}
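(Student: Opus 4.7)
The plan is to compute the bigraded Betti numbers of $M$ at each grade $p\in\Z^2$ via the Koszul complex of $M$ at $p$, and then to identify the resulting homology dimensions with the zigzag multiplicities $n_p$ and $m_{p-e_1-e_2}$.

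Viewing $M$ as a finitely generated bigraded module over $S=\F[x_1,x_2]$ (where multiplication by $x_i$ acts as $\varphi_M(\cdot,\cdot+e_i)$), we have $\beta_j(M)(p)=\dim_\F\mathrm{Tor}^S_j(M,\F)(p)$ with $\F=S/(x_1,x_2)$. Resolving $\F$ by the Koszul complex yields, at each grade $p$, the three-term sequence
$$0\longrightarrow M(p-e_1-e_2)\xrightarrow{\,d_2\,}M(p-e_1)\oplus M(p-e_2)\xrightarrow{\,d_1\,}M(p)\longrightarrow 0,$$
with $d_1(u,w)=\varphi_M(p-e_1,p)(u)+\varphi_M(p-e_2,p)(w)$ and $d_2(v)=\bigl(\varphi_M(p-e_1-e_2,p-e_1)(v),\,-\varphi_M(p-e_1-e_2,p-e_2)(v)\bigr)$. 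Therefore $\beta_0(M)(p)=\dim\mathrm{coker}(d_1)$, $\beta_2(M)(p)=\dim\ker(d_2)$, and $\beta_1(M)(p)=\dim\ker(d_1)-\dim\im(d_2)$.

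Next, enumerating the six indecomposable interval modules over $\{\bullet_1\leq\bullet_2\geq\bullet_3\}$ shows that for any zigzag $X_1\xrightarrow{f}X_2\xleftarrow{g}X_3$, the multiplicity of $V_{\{\bullet_2\}}$ equals $\dim\bigl(X_2/(\im f+\im g)\bigr)$. Applied to $M|_{\{p-e_1\leq p\geq p-e_2\}}$ this gives $n_p=\dim\mathrm{coker}(d_1)=\beta_0(M)(p)$. Dually, for $X_1\xleftarrow{f}X_2\xrightarrow{g}X_3$ the middle-supported multiplicity equals $\dim(\ker f\cap\ker g)$; with $X_2=M(p-e_1-e_2)$, this yields $m_{p-e_1-e_2}=\dim\ker(d_2)=\beta_2(M)(p)$. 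Finally, rank-nullity applied to the Koszul complex gives
$$\beta_1(M)(p)=\bigl[\dim M(p-e_1)+\dim M(p-e_2)-\dim\im(d_1)\bigr]-\bigl[\dim M(p-e_1-e_2)-\dim\ker(d_2)\bigr],$$
and substituting $\dim\im(d_1)=\dim M(p)-n_p$ and $\dim\ker(d_2)=m_{p-e_1-e_2}$ recovers the claimed expression.

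The main obstacle I anticipate is expositional rather than mathematical: the paper has not explicitly introduced $\mathrm{Tor}$ or the Koszul resolution, so the Koszul-complex formula for $\beta_j(M)(p)$ must either be invoked as a standard commutative-algebra fact or derived from scratch. If the latter is preferred, one can instead build the first two terms of a minimal free resolution of $M$ directly from the definition in Section \ref{sec:bigraded betti numbers}: minimality forces the rank of $F_0$ at grade $p$ to equal $\dim\mathrm{coker}(d_1)=n_p$, after which a local analysis of $K_0=\ker(F_0\to M)$ at grades near $p$ identifies the $\beta_1$ and $\beta_2$ contributions, with the intersection of kernels producing the $m_{p-e_1-e_2}$ term.
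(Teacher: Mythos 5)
Your proof is correct and follows exactly the Koszul-complex route that the paper itself sketches (tensoring $M$ with the Koszul complex on $x_1,x_2$ and reading off $\mathrm{Tor}$ degree by degree), with the paper otherwise deferring to a cited combinatorial proof. Your identifications $n_p=\dim\mathrm{coker}(d_1)$ and $m_{p-e_1-e_2}=\dim\ker(d_2)$, verified by enumerating the interval summands of the two length-$3$ zigzags, correctly supply the details the paper leaves to its references.
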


A combinatorial proof of this lemma 
can be found in \cite[Corollary 2.3]{moore2020subquivers}. This lemma can be also proved by utilizing machinery from commutative algebra as follows (see {\cite[Section 2A.3]{eisenbudbetti}} for details): A finitely generated 2-parameter persistence module $M$ can equivalently be considered as an $\N^2$ graded module over $k[x_1,x_2]$.  The bigraded Betti numbers of $M$ can be defined using tensor products, after which Lemma \ref{lem:Combinatorial Betti Formula} follows by tensoring $M$ with the Koszul complex on $x_1$ and $x_2$. 

\new{For $p\in \Zb^2$, we consider the subposets $\llcorner p:=\{p+e_1> p < p+e_2\}$ and $p\urcorner:=\{p-e_1< p > p-e_2\}$ of $\Zb^2$. Let $\mathbf{M}(\Zb^2)$ be the collection of all multisets of subsets of $\Zb^2$.  Lemma \ref{lem:Combinatorial Betti Formula} implies:}
\begin{corollary}\label{cor:zz barcode determine bettis} \new{Let $M$ be any finitely generated $\Zb^2$-module. Then, the two maps $\Zb^2\rightarrow \mathbf{M}(\Zb^2)$ given by
\[p\mapsto \barc(M\vert_{\llcorner p})\ \ \mbox{and}\ \ p\mapsto \barc(M\vert_{p\urcorner})\]
determine the bigraded Betti numbers of $M$.} 
\end{corollary}

\begin{proof}[Proof of Proposition \ref{prop:GPD determines Betti}]
We consider the case $j=1,$ as the other cases are similar.
By Lemma \ref{lem:Combinatorial Betti Formula},
\begin{multline}\label{eq:6}\beta_1(M)(p)=\\ n_p-\dim(M(p))+\dim(M(p-e_1))+\dim(M(p-e_2))-\dim(M(p-e_1-e_2))+m_{p-e_1-e_2}.
\end{multline}
Let $p\in \Zb^2$ where $p\notin[m]\times [n]$. Then, we claim that $\beta_1(M)(p)=0$. This fact can be shown by checking that $0=n_p=m_{p-e_1-e_2}$ and \[0=-\dim(M(p))+\dim(M(p-e_1))+\dim(M(p-e_2))-\dim(M(p-e_1-e_2)).\]
Next, let $p\in [m]\times [n]$.
We will now find a formula for each term in the right-hand side (RHS) of Equation (\ref{eq:6}) in terms of the generalized rank invariant of $M\vert{}_{[m]\times [n]}=M'$. Notice that for every $q\in[m]\times [n]$,
\begin{equation}\label{eq:7}
    \dim(M(q))=\rk(M')(\{q\}).
\end{equation}
Next, consider $M\vert{}_{\{p-e_1\leq p \geq p-e_2\}}$, which is a zigzag module and thus it is interval decomposable (Theorem \ref{thm:interval decomposable}). Recall that $n_p$ is the multiplicity of $\{p\}$ in the barcode of $M\vert{}_{\{p-e_1\leq p \geq p-e_2\}}$. From Example \ref{ex:zigzag mobius inversion}, we know that:
\begin{equation}\label{eq:8}
n_p=\rk(M')(\{p\})-\rk(M')(\{p-e_1\leq p\})-\rk(M')(\{p \geq p-e_2\})+\rk(M')(\{p-e_1\leq p \geq p-e_2\}).\end{equation}
Similarly, we have:
\begin{equation}\label{eq:9}
  \begin{aligned}
   m_{p-e_1-e_2}=&\rk(M')(\{p-e_1-e_2\})-\rk(M')(\{p-e_1-e_2\leq p-e_1\}) \\
    &- \rk(M')(\{p-e_1-e_2 \leq p-e_2\}) +\rk(M')(\{p-e_1\geq p-e_1-e_2 \leq p-e_2\}). \\
  \end{aligned}
\end{equation}
Combining equations (\ref{eq:6}), (\ref{eq:7}), (\ref{eq:8}), (\ref{eq:9}) yields:
\begin{equation*}
  \begin{aligned}
  \beta_1(M)(p)= &-\rk(M')(\{p-e_1\leq p\})-\rk(M')(\{p \geq p-e_2\})+\rk(M')(\{p-e_1\leq p \geq p-e_2\})\\
  &+\rk(M')(\{p-e_1\})+\rk(M')(\{p-e_2\})-\rk(M')(\{p-e_1-e_2\leq p-e_1\})\\
  &- \rk(M')(\{p-e_1-e_2 \leq p-e_2\})+\rk(M')(\{p-e_1\geq p-e_1-e_2 \leq p-e_2\}). \\
  \end{aligned}
\end{equation*}
Since $p\in [m]\times [n]$, by invoking Equation (\ref{eq:GPD0}), we obtain:
\begin{equation}\label{eq:10}
  \begin{aligned}
  \beta_1(M)(p)=&-\hspace{.3cm}\sum\limits_{\mathclap{J\ni p-e_1,p}} \dgm(M')(J) \hspace{.3cm}-\hspace{.3cm}\sum\limits_{\mathclap{J\ni p-e_2, p}} \dgm(M')(J) \hspace{.3cm}+ \hspace{.3cm} \sum\limits_{\mathclap{J\ni p-e_1, p-e_2, p}} \dgm(M')(J)\\ 
  &+\hspace{.3cm} \sum\limits_{\mathclap{J\ni p-e_1}} \dgm(M')(J) \hspace{.3cm} +\hspace{.3cm} \sum\limits_{\mathclap{J\ni p-e_2}} \dgm(M')(J)\hspace{.3cm}-\hspace{.3cm} \sum\limits_{\mathclap{J\ni p-e_1-e_2, p-e_1}} \dgm(M')(J)\\ &-\hspace{.3cm}\sum\limits_{\mathclap{J\ni p-e_1-e_2, p-e_2}} \dgm(M')(J) \hspace{.3cm}+ \hspace{.3cm} \sum\limits_{\mathclap{J\ni p-e_1-e_2, p-e_1, p-e_2}} \dgm(M')(J)\\
  \end{aligned}
\end{equation}
Let $J\in\con([m]\times [n])$. The multiplicity of $\dgm(M')(J)$ in the RHS of Equation (\ref{eq:10}) is fully determined by the intersection of $J$ and the four-point set $\{p-e_1-e_2,\ p-e_1,\ p-e_2,\ p\}$. For example, if $p-e_1, p-e_1-e_2\in J$ and $p-e_2, p\notin J,$ then $\dgm(M')(J)$ occurs only in the fourth and sixth sums, and has an overall multiplicity of zero in the RHS of Equation (\ref{eq:10}). For another example, if $p-e_1\in J$ 
and $p-e_1-e_2, p-e_2, p\notin J$, then $\dgm(M')(J)$ occurs only in the fourth summand, which yields the first sum of the RHS in Equation (\ref{eq:11}) below. Considering all possible $2^4$ combinations of the intersection of $J$ and the four-point set $\{p-e_1-e_2,\ p-e_1,\ p-e_2,\ p\}$ yields

\begin{equation}\label{eq:11}
  \begin{aligned}
  \beta_1(M)(p)&=\hspace{.3cm}\sum\limits_{\mathclap{\substack{J\ni p-e_1\\ J\niton p-e_1-e_2, p-e_2, p}}} \dgm(M')(J)  \hspace{.3cm}+  \hspace{.3cm} \sum\limits_{\mathclap{\substack{J \ni p-e_2\\ J\niton p-e_1-e_2, p-e_1, p}}} \dgm(M')(J) \hspace{.3cm}+  \hspace{.3cm}  \sum\limits_{\mathclap{\substack{J\ni p-e_1-e_2, p-e_1, p-e_2\\ J\niton p}}} \dgm(M')(J)  \\
      & +\hspace{.1cm}2\hspace{.1cm} \sum\limits_{\mathclap{\substack{J\ni p-e_1, p-e_2\\ J\niton p-e_1-e_2, p}}} \dgm(M')(J)\hspace{.3cm}+\hspace{.3cm}\sum\limits_{\mathclap{\substack{J\ni p-e_1, p-e_2, p\\ J\niton p-e_1-e_2}}} \dgm(M')(J) \hspace{.3cm} -\hspace{.3cm}\sum\limits_{\mathclap{\substack{J\ni p-e_1-e_2, p-e_1, p\\ J\niton p-e_2}}} \dgm(M')(J) \\ &-\hspace{.3cm}\sum\limits_{\mathclap{\substack{J\ni p-e_1-e_2, p-e_2, p\\ J\niton p-e_1}}} \dgm(M')(J), 
  \end{aligned}
\end{equation}
as claimed.
\end{proof}

\begin{proof}[Proof of Theorem \ref{thm:pictorial formula}]
We only prove equation (\ref{eq:pictorial2}) with $j=1$, as the other cases are similar. By Remark \ref{rem: intervals},
\begin{multline}\label{eq:12}
  \beta_1(M)(p)=\\ \hspace{.3cm}\sum\limits_{\mathclap{\substack{J\ni p-e_1\\ J\niton p-e_1-e_2, p-e_2, p}}} \intdgm(M')(J)  \hspace{.3cm}+  \hspace{.3cm} \sum\limits_{\mathclap{\substack{J \ni p-e_2\\ J\niton p-e_1-e_2, p-e_1, p}}} \intdgm(M')(J) \hspace{.3cm}+  \hspace{.3cm}  \sum\limits_{\mathclap{\substack{J\ni p-e_1-e_2, p-e_1, p-e_2\\ J\niton p}}} \intdgm(M')(J)
  + \hspace{.3cm} \sum\limits_{\mathclap{\substack{J\ni p-e_1, p-e_2, p\\ J\niton p-e_1-e_2}}} \intdgm(M')(J),
\end{multline}
where each sum is taken over $J\in\inter(\Zb^2)$.

Furthermore, for $I\in\inter(\Zb^2)$, observe that $\tau_1(I^+)(p)=1$ if and only if one of the following is true about $I$: (i) $p-e_1\in I$ and $p-e_1-e_2, p-e_2, p\notin I$, (ii) $p-e_2\in I$ and $p-e_1-e_2, p-e_1, p\notin I$, (iii) $p-e_1-e_2, p-e_1, p-e_2\in I$ and $p\notin I$, or (iv) $p-e_1, p-e_2, p\in I$ and $p-e_1-e_2\notin I$. Otherwise, $\tau_1(I^+)(p)=0$. These four cases (i),(ii),(iii), and (iv) correspond to the four sums on the RHS of Equation (\ref{eq:12})  in order, and also correspond to the $1^{\mathrm{st}}$ corner types (i),(ii),(iii), (iv) given in Figure \ref{fig:corner points in general}. Therefore:
\begin{equation*}\label{eq:13}
  \begin{aligned}
  \beta_1(M)(p)=
  \sum\limits_{I\in\inter([m]\times[n])}\intdgm(M')(I)\times\tau_1(I^+)(p).\\
  \end{aligned}
\end{equation*}
\end{proof}

\section{Impossibility of extending Theorem \ref{thm:pictorial formula} to $d$-parameter persistence modules for $d\geq 3$}

In this section, we show that Theorem \ref{thm:pictorial formula} cannot be extended to $\Zb^d$-modules for $d\geq 3$. This impossibility claim directly follows from:

\begin{theorem}\label{thm:no pictorial formula}
For $d\geq 3$, the generalized persistence diagram does not determine the multigraded Betti numbers of $\Zb^d$-modules. In particular, there exists a pair of $\Zb^d$-modules that have the same generalized persistence diagram, but different multigraded Betti numbers.
\end{theorem}

To prove this theorem, we find a pair of $\Zb^3$-modules that have the same generalized persistence diagram, but different multigraded Betti numbers. Since any $\Zb^d$-module $M$ can be trivially extended to the $\Zb^{d+1}$-module $M\times0$, the existence of such a pair proves the claim for arbitrary $d \geq 3$.

\begin{proof}[Proof of Theorem \ref{thm:no pictorial formula}.]
Let $\pi_1,\pi_2:k^2\rightarrow k$ be the canonical projections onto the first and the second coordinates of $k^2$ respectively. Let $M$ be the $\Zb^3$-module described as follows: Over the subset $\{(x,y,z)\in\Zb^3: 0\leq x,y,z\leq 1\}\subset \Zb^3$, $M$ is given by 
\begin{center}
\tdplotsetmaincoords{60}{125}
\tdplotsetrotatedcoords{0}{30}{120} 
\begin{tikzpicture}[scale=2,tdplot_rotated_coords,
                    cube/.style={very thick,black},
                    grid/.style={very thin,gray},
                    axis/.style={->,blue,ultra thick},
                    rotated axis/.style={->,purple,ultra thick}]
\foreach \x in {0,1}
   \foreach \y in {0,1}
      \foreach \z in {0,1}{
           \ifthenelse{\x<1 }
           {
                \draw [black]   (\x+0.2,\y,\z) -- (\x+0.8,\y,\z);
           }
           {
           }
           \ifthenelse{ \y<1  }
           {
                \draw [black]   (\x,\y+0.2,\z)-- (\x,\y+0.8,\z);
           }
           {
           }
           \ifthenelse{ \z<1  }
           {
                \draw [black]   (\x,\y,\z+0.2) -- (\x,\y,\z+0.8);
           }
           {
           }
          \node[scale=0.7] at (0,0,0) {$(0,0,0)$};
           \node[scale=0.7] at (0,0,1) {$(0,1,0)$};       
            \node[scale=0.7] at (1,0,0) {$(1,0,0)$};
            \node[scale=0.7] at (0,1,0) {$(0,0,1)$}; 
            \node[scale=0.7] at (1,1,1) {$(1,1,1)$};
              \node[scale=0.7] at (1,1,0) {$(1,0,1)$};
                \node[scale=0.7] at (1,0,1) {$(1,1,0)$};  
               \node[scale=0.7] at (0,1,1) {$(0,1,1)$}; 
}

\end{tikzpicture}
\hspace{10mm}
\begin{tikzpicture}[scale=2,tdplot_rotated_coords,
                    cube/.style={very thick,black},
                    grid/.style={very thin,gray},
                    axis/.style={->,blue,ultra thick},
                    rotated axis/.style={->,purple,ultra thick}]
\foreach \x in {0,1}
   \foreach \y in {0,1}
      \foreach \z in {0,1}{
           \ifthenelse{\x<1 }
           {
                \draw [->,black]   (\x+0.15,\y,\z) -- (\x+0.85,\y,\z);
           }
           {
           }
           \ifthenelse{ \y<1  }
           {
                \draw [->,black]   (\x,\y+0.15,\z)-- (\x,\y+0.85,\z);
           }
           {
           }
           \ifthenelse{ \z<1  }
           {
                \draw [->,black]   (\x,\y,\z+0.15) -- (\x,\y,\z+0.85);
           }
           {
           }}
         \node[scale=1] at (0,0,0) {$k^2$};
            \node at (0,0,1) {$k$};
        \node at (1,0,0) {$k$};
                   \node at (0,1,0) {$k$};
            \node[scale=0.9] at (1,1,1) {$0$};
              \node[scale=0.9] at (1,1,0) {$0$};
                \node[scale=0.9] at (1,0,1) {$0$};
                  \node[scale=0.9] at (1,1,0) {$0$};                  
                  \node[scale=0.9] at (0,1,1) {$0$};
                \draw[->,opacity=0] (0,0,0) -- (0,1,0) node[pos=0.5,right,opacity=1] {$\pi_1+\pi_2$};       
 \draw[->,opacity=0] (0,0,0) -- (0,0,1) node[midway,left,opacity=1] {$\pi_2$};      
  \draw[->,opacity=0] (0,0,0) -- (1,0,0) node[pos=0.6,below,opacity=1] {$\pi_1$};      

\end{tikzpicture}
\end{center}
and $M(p)=0$ for $p\not\in \{(x,y,z)\in\Zb^3: 0\leq x,y,z\leq 1\}$. In the poset $\Zb^3$, consider the intervals \[I_0:=\{(0,0,0)\},\ \ I_1:=I_0\cup\{(1,0,0)\},\ \ I_2:=I_0\cup\{(0,1,0)\},\ \ I_3:=I_0\cup\{(0,0,1)\}. \] 
Combine the interval modules $V_{I_i}$ for $i=0,1,2,3$ and $M$ into the modules \[N_1:=M\oplus V_{I_0} \ \ \mbox{and}\ \ \ N_2:=V_{I_1}\oplus V_{I_2} \oplus V_{I_3}.\] Since $N_2$ is interval decomposable, by Theorem \ref{thm:GPD is barcode proxy}, both $\dgm(N_2)$ and $\intdgm(N_2)$ amount to the barcode of $N_2$. We observe that $\rk(N_1)=\rk(N_2)$, which implies not only $\intrk(N_1)=\intrk(N_2)$, but also \[\dgm(N_1)=\dgm(N_2) \ \ \mbox{and}\ \ \intdgm(N_1)=\intdgm(N_2).\]

Now we prove that the multigraded Betti numbers of $N_1$ and $N_2$ do not coincide. Since multigraded Betti numbers are additive (Remark \ref{rem:bigraded bettis for an interval} \ref{item:bigraded betti is additive}), we have that 
\begin{equation*}
\beta_3(N_1)(1,1,1)= \beta_3(M)(1,1,1)+\beta_3(V_{I_0})(1,1,1)\geq \beta_3(V_{I_0})(1,1,1)=1,
\end{equation*}
as seen in Figure \ref{fig:3D}. However, since $\beta_3(V_{I_1})(1,1,1)=\beta_3(V_{I_2})(1,1,1)=\beta_3(V_{I_3})(1,1,1)=0$ (also shown in Figure \ref{fig:3D}), we have that $\beta_3(N_2)(1,1,1)=0$, completing the proof. 
 \end{proof}

\tdplotsetmaincoords{60}{125}
\tdplotsetrotatedcoords{0}{30}{120} 
\begin{figure}
\begin{center}
\begin{tikzpicture}[scale=1,tdplot_rotated_coords,
                    cube/.style={very thick,black},
                    grid/.style={very thin,gray},
                    axis/.style={->,blue,ultra thick},
                    rotated axis/.style={->,purple,ultra thick}]
\filldraw[fill=blue,opacity=0.3] (0,0,0) -- (1,0,0) -- (1,1,0) -- (1,1,1) -- (0,1,1) -- (0,0,1) -- (0,0,0) -- cycle;  
\foreach \x in {0,1,2}
   \foreach \y in {0,1,2}
      \foreach \z in {0,1,2}{
           \ifthenelse{  \lengthtest{\x pt < 2pt}  }
           {
                \draw [black]   (\x,\y,\z) -- (\x+1,\y,\z);
           }
           {
           }
           \ifthenelse{  \lengthtest{\y pt < 2pt}  }
           {
                \draw [black]   (\x,\y,\z)-- (\x,\y+1,\z);
           }
           {
           }
           \ifthenelse{  \lengthtest{\z pt < 2pt}  }
           {
                \draw [black]   (\x,\y,\z) -- (\x,\y,\z+1);
           }
           {
           }
         \node[scale=1] at (0,0,0) {$\bullet$};
          \node[scale=0.7,below] at (0,0,0) {$(0,0,0)$};
           \node[scale=0.7,left] at (0,0,1) {$(0,1,0)$};
            \node[scale=0.7,below] at (1,0,0) {$(1,0,0)$};
            \node[scale=0.9] at (1,1,1) {$\blacktriangle$};
            \node[scale=0.9,red] at (1,0,0) {$\bigstar$};
             \node[scale=0.9,red] at (0,1,0) {$\bigstar$};
              \node[scale=0.9,red] at (0,0,1) {$\bigstar$};
               \node[scale=0.9,blue] at (1,1,0) {$\blacksquare$};
                \node[scale=0.9,blue] at (1,0,1) {$\blacksquare$};
                  \node[scale=0.9,blue] at (0,1,1) {$\blacksquare$};
}

\end{tikzpicture}
\begin{tikzpicture}[scale=1,tdplot_rotated_coords,
                    cube/.style={very thick,black},
                    grid/.style={very thin,gray},
                    axis/.style={->,blue,ultra thick},
                    rotated axis/.style={->,purple,ultra thick}]
\filldraw[fill=blue,opacity=0.3] (0,0,0) -- (2,0,0) -- (2,1,0) -- (2,1,1) -- (0,1,1) -- (0,0,1) -- (0,0,0) -- cycle;                    
\foreach \x in {0,1,2}
   \foreach \y in {0,1,2}
      \foreach \z in {0,1,2}{
           \ifthenelse{  \lengthtest{\x pt < 2pt}  }
           {
                \draw [black]   (\x,\y,\z) -- (\x+1,\y,\z);
           }
           {
           }
           \ifthenelse{  \lengthtest{\y pt < 2pt}  }
           {
                \draw [black]   (\x,\y,\z)-- (\x,\y+1,\z);
           }
           {
           }
           \ifthenelse{  \lengthtest{\z pt < 2pt}  }
           {
                \draw [black]   (\x,\y,\z) -- (\x,\y,\z+1);
           }
           {
           }
         \node[scale=1] at (0,0,0) {$\bullet$};
            \node[scale=0.9] at (2,1,1) {$\blacktriangle$};
             \node[scale=0.9,red] at (2,0,0) {$\bigstar$};
             \node[scale=0.9,red] at (0,1,0) {$\bigstar$};
              \node[scale=0.9,red] at (0,0,1) {$\bigstar$};
               \node[scale=0.9,blue] at (2,1,0) {$\blacksquare$};
                \node[scale=0.9,blue] at (2,0,1) {$\blacksquare$};
                  \node[scale=0.9,blue] at (0,1,1) {$\blacksquare$};
}

\end{tikzpicture}
\begin{tikzpicture}[scale=1,tdplot_rotated_coords,
                    cube/.style={very thick,black},
                    grid/.style={very thin,gray},
                    axis/.style={->,blue,ultra thick},
                    rotated axis/.style={->,purple,ultra thick}]
\filldraw[fill=blue,opacity=0.3] (0,0,0) -- (1,0,0) -- (1,1,0) -- (1,1,2) -- (0,1,2) -- (0,0,2) -- (0,0,0) -- cycle;
\foreach \x in {0,1,2}
   \foreach \y in {0,1,2}
      \foreach \z in {0,1,2}{
           \ifthenelse{  \lengthtest{\x pt < 2pt}  }
           {
                \draw[black]   (\x,\y,\z) -- (\x+1,\y,\z);
           }
           {
           }
           \ifthenelse{  \lengthtest{\y pt < 2pt}  }
           {
                \draw[black]   (\x,\y,\z)-- (\x,\y+1,\z);
           }
           {
           }
           \ifthenelse{  \lengthtest{\z pt < 2pt}  }
           {
                \draw[black]   (\x,\y,\z) -- (\x,\y,\z+1);
           }
           {
           }
         \node[scale=1] at (0,0,0) {$\bullet$};
            \node[scale=0.9] at (1,1,2) {$\blacktriangle$};
          \node[scale=0.9,red] at (1,0,0) {$\bigstar$};
             \node[scale=0.9,red] at (0,1,0) {$\bigstar$};
              \node[scale=0.9,red] at (0,0,2) {$\bigstar$};
               \node[scale=0.9,blue] at (1,1,0) {$\blacksquare$};
                \node[scale=0.9,blue] at (1,0,2) {$\blacksquare$};
                  \node[scale=0.9,blue] at (0,1,2) {$\blacksquare$};   
}
\end{tikzpicture}
\begin{tikzpicture}[scale=1,tdplot_rotated_coords,
                    cube/.style={very thick,black},
                    grid/.style={very thin,gray},
                    axis/.style={->,blue,ultra thick},
                    rotated axis/.style={->,purple,ultra thick}]
\filldraw[fill=blue,opacity=0.3] (0,0,0) -- (1,0,0) -- (1,2,0) -- (1,2,1) -- (0,2,1) -- (0,0,1) -- (0,0,0) -- cycle;                    
\foreach \x in {0,1,2}
   \foreach \y in {0,1,2}
      \foreach \z in {0,1,2}{
           \ifthenelse{  \lengthtest{\x pt < 2pt}  }
           {
                \draw [black]   (\x,\y,\z) -- (\x+1,\y,\z);
           }
           {
           }
           \ifthenelse{  \lengthtest{\y pt < 2pt}  }
           {
                \draw [black]   (\x,\y,\z)-- (\x,\y+1,\z);
           }
           {
           }
           \ifthenelse{  \lengthtest{\z pt < 2pt}  }
           {
                \draw [black]   (\x,\y,\z) -- (\x,\y,\z+1);
           }
           {
           }
         \node[scale=1] at (0,0,0) {$\bullet$};
            \node[scale=0.9] at (1,2,1) {$\blacktriangle$};
          \node[scale=0.9,red] at (1,0,0) {$\bigstar$};
             \node[scale=0.9,red] at (0,2,0) {$\bigstar$};
              \node[scale=0.9,red] at (0,0,1) {$\bigstar$};
               \node[scale=0.9,blue] at (1,2,0) {$\blacksquare$};
                \node[scale=0.9,blue] at (1,0,1) {$\blacksquare$};
                  \node[scale=0.9,blue] at (0,2,1) {$\blacksquare$};               
}

\end{tikzpicture}
\end{center}
\caption{For the interval modules $V_{I_j}$ for $j=0,1,2,3$ given in the proof of Theorem \ref{thm:no pictorial formula}, the figures above depict the multigraded Betti numbers for $V_{I_j}$ in order from left to right. 
In these illustrations, a black dot at $p$ indicates that $\beta_0(V_{I_j})(p)=1$, a red star at $p$ indicates that $\beta_1(V_{I_j})(p)=1$, a blue square at $p$ indicates that $\beta_2(V_{I_j})(p)=1$, and a black triangle at $p$ indicates that $\beta_3(V_{I_j})(p)=1$. For $i\geq 3$, $j\in\{0,1,2,3\}$, and $p\in \Zb^3$, $\beta_i(V_{I_j})(p)=0$. }\label{fig:3D}
\end{figure}

\section{Conclusions}\label{sec:discussion}

The formula in Theorem \ref{thm:pictorial formula} for computing the bigraded Betti numbers reinforces the fact that the ($\inter$-)generalized persistence diagram and the \emph{interval decomposable approximation} by Asashiba et al. (Remark \ref{rem:asashiba}) are a proxy for the ``barcode" of $M$ in a novel way. Some open questions follow.
\begin{enumerate}[label=(\roman*)]
    \item Note that when $M$ is a finitely generated $\Zb^2$-module, $\dgm(M)$ can recover $\intdgm(M)$ by construction while $\intdgm(M)$ may not be able to recover $\dgm(M)$. However, if $M$ is interval decomposable, then both $\dgm(M)$ and $\intdgm(M)$ are equivalent to the barcode of $M$ by Theorem \ref{thm:GPD is barcode proxy}. Are there other settings in which $\intdgm(M)$ can recover $\dgm(M)$?
    \item \new{How can we utilize Theorem \ref{thm:pictorial formula} alongside efficient algorithms for computing the bigraded Betti numbers  \cite{lesnick2019computing, kerber2021fast}, as a means to estimate or calculate the generalized persistence diagram of a 2-parameter persistence module (cf. Remark \ref{rem:meaning})?}
\end{enumerate}

\bibliographystyle{abbrv}
\bibliography{biblio}

\appendix
\section{Appendix}

\paragraph{Types of corner points.} We define the $0^\mathrm{th}$, $1^\mathrm{st}$ and $2^\mathrm{nd}$ type corner points that are depicted in Fig. \ref{fig:corner points in general}. Given any $A\subset \Rb^2$, let $\one_A:\Rb^2\rightarrow \{0,1\}$ be the indicator function of $A$, i.e.
$\one_A(p)=1$ if $p\in 
A$ and zero otherwise.

\begin{definition}\label{def:types of corners} Let $I\in \con(\Zb^2)$ and let $I^+:=\bigcup_{(p_1,p_2)\in I}[p_1,p_1+1)\times [p_2,p_2+1)\subset \Rb^2$. Fix $\ba\in \Rb^2$. This $\ba$ is a \textbf{0-th type corner point} of $I^+$ if \[\one_{I^+}(\ba)=1, \ \ \  \lim_{\eps\rightarrow0+}\one_{I^+}(\ba-(\eps,0))=\lim_{\eps\rightarrow0+}\one_{I^+}(\ba-(0,\eps))=\lim_{\eps\rightarrow0+}\one_{I^+}(\ba-(\eps,\eps))=0.\] The point $\ba$ is a \textbf{1-st type corner point with multiplicity $k$ ($k=1,2$)} of $I^+$ if one of the following two conditions holds:
\begin{enumerate}[label=(\roman*)]
    \item ($k=1$) Either the following is evaluated to be -1
    \begin{equation}\label{eq:1st corner determinant}
        \one_{I^+}(\ba)- \lim_{\eps\rightarrow0+}\one_{I^+}(\ba-(\eps,0))-\lim_{\eps\rightarrow0+}\one_{I^+}(\ba-(0,\eps))+\lim_{\eps\rightarrow0+}\one_{I^+}(\ba-(\eps,\eps))
    \end{equation} 
    (cf. (i)-(iv) in the panel corresponding to the $1^{\mathrm{st}}$ type in Figure \ref{fig:corner points in general}), or the following holds: \[\one_{I^+}(\ba)=\lim_{\eps\rightarrow0+}\one_{I^+}(\ba-(\eps,\eps))=1\ \  \mbox{ and }\ \  \lim_{\eps\rightarrow0+}\one_{I^+}(\ba-(\eps,0))\neq  \lim_{\eps\rightarrow0+}\one_{I^+}(\ba-(0,\eps))\]
     (cf. (v) and (vi) in the panel corresponding to the $1^{\mathrm{st}}$ type in Figure \ref{fig:corner points in general}).
    
    \item \textbf{($k=2$)} The formula given in (\ref{eq:1st corner determinant}) is evaluated to be -2 (cf. the point $p$ in Figure \ref{fig:corner points in general}). 
\end{enumerate}

The point $\ba$ is a \textbf{2-nd type corner point} of $I^+$ if \[\one_{I^+}(\ba)= \lim_{\eps\rightarrow0+}\one_{I^+}(\ba-(\eps,0))=\lim_{\eps\rightarrow0+}\one_{I^+}(\ba-(0,\eps))=0, \ \mbox{ and } \lim_{\eps\rightarrow0+}\one_{I^+}(\ba-(\eps,\eps))=1.\]
\end{definition}
Definition \ref{def:types of corners} is closely related to the \emph{differential} of an interval introduced in \cite{dey2018computing}.

\paragraph{The generalized persistence diagram is more discriminative than the $\inter$-generalized persistence diagram.}

We provide a pair of persistence modules that are distinguished by their generalized rank invariants (and hence by their generalized persistence diagrams) but have the same $\inter$-generalized rank invariant (and hence the same $\inter$-generalized persistence diagram).

\begin{example}\label{ex:GPD_vs_IGPD} Let $M,N:[2]^2\rightarrow \vect$ and $J\in \con([2]^2)$ be defined as in Figure \ref{fig:GPD_vs_IGPD}. Then, $\rk(M)(J)=1$ whereas $\rk(N)(J)=0$; this directly follows from Theorem \ref{thm:rkequalsintervals}. Note also that for all $I\supsetneq J$ in $\con([2]^2)$, we have that $\rk(M)(I)=\rk(N)(I)=0$. This implies, by equations (\ref{eq:induction}), (\ref{eq:GPD}), (\ref{eq:IGPD}), that $\dgm(M)(J)=1\neq 0=\dgm(N)(J)$.

Invoking Theorem \ref{thm:rkequalsintervals} again, one can check that $\rk(M)(I)=\rk(N)(I)$ for all $I\in \inter([2]^2)$, i.e. $\intrk(M)=\intrk(N)$ and thus $\intdgm(M)=\intdgm(N)$.
\end{example}

\begin{figure}
    \centering
    \includegraphics[width=0.7\textwidth]{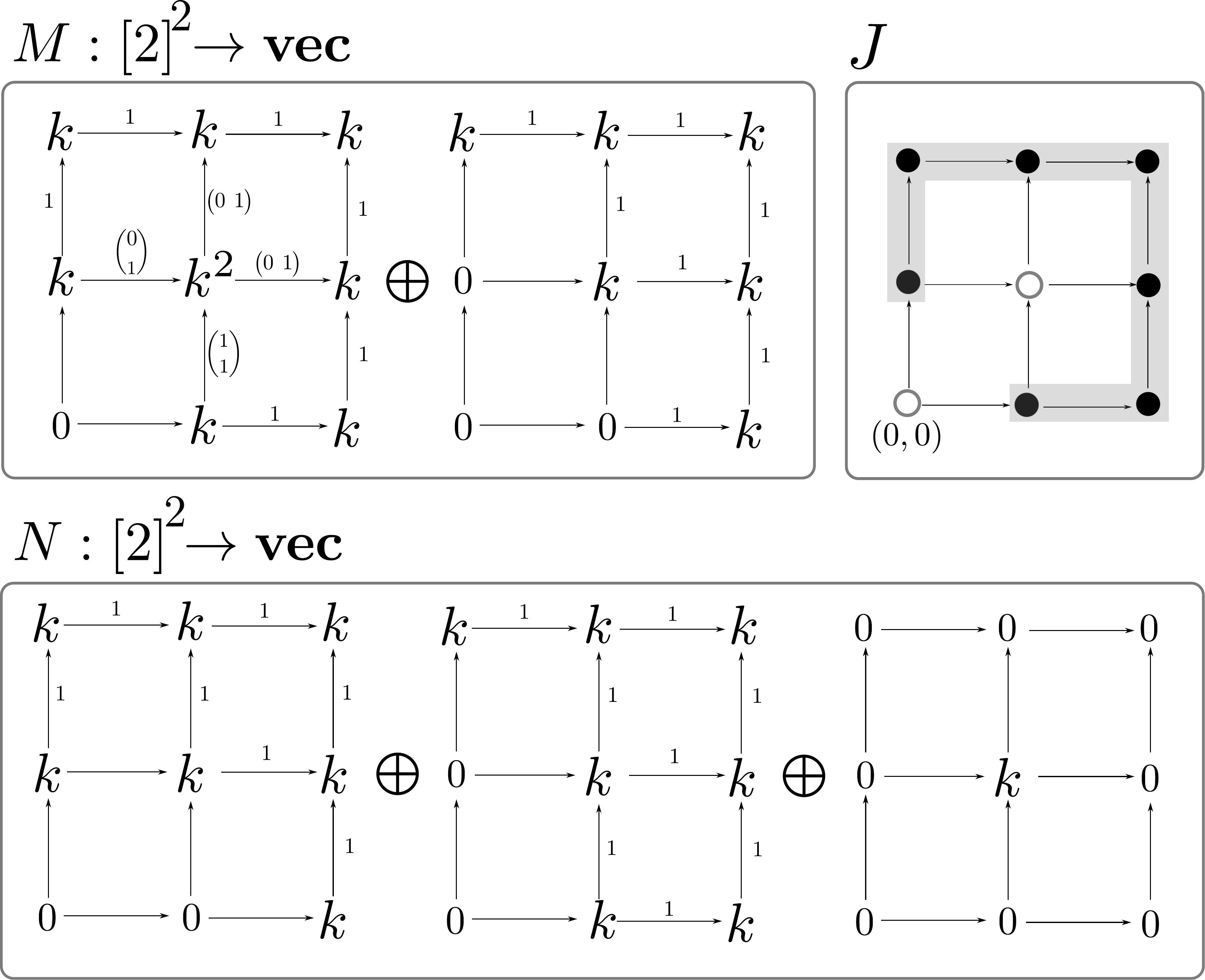}
    \caption{An illustration for Example \ref{ex:GPD_vs_IGPD} illustrating that, in general, $\dgm(M)$ is a stronger invariant than $\intdgm(M)$}.
    \label{fig:GPD_vs_IGPD}
\end{figure}

\paragraph{Limits and colimits.}\label{sec:limits}
 We recall the notions of limit and colimit \cite[{Chapter \rom{5}}]{mac2013categories}. In what follows, $I$ stands for a small category, i.e. $I$ has a set of objects and a set of morphisms. Let $\Ccal$ be any category.
\begin{definition}[Cone]\label{def:cone} Let $F:I\rightarrow \Ccal$ be a functor. A \emph{cone} over $F$ is a pair $\left(L,(\pi_x)_{x\in \ob(I)}\right)$ consisting of an object $L$ in $\Ccal$ and a collection $(\pi_x)_{x\in \ob(I)}$ of morphisms $\pi_x:L \rightarrow F(x)$ that commute with the arrows in the diagram of $F$, i.e. if $g:x\rightarrow y$ is a morphism in $I$, then $\pi_y= F(g)\circ \pi_x$ in $\Ccal$. Equivalently, the diagram below commutes:
\end{definition}
\[\begin{tikzcd}F(x)\arrow{rr}{F(g)}&&F(y)\\
& L \arrow{lu}{\pi_x} \arrow{ru}[swap]{\pi_y}\end{tikzcd}\]

A limit of $F:I\rightarrow \Ccal$ is a terminal object in the collection of all cones over $F$:

\begin{definition}[Limit]\label{def:limit} Let $F:I\rightarrow \Ccal$ be a functor. A \emph{limit} of $F$ is a cone over $F$, denoted by $\left(\varprojlim F,\ (\pi_x)_{x\in \ob(I)} \right)$ or simply $\varprojlim F$, with the following \emph{terminal} property: If there is another cone $\left(L',(\pi'_x)_{x\in \ob(I)} \right)$ of $F$, then there is a \emph{unique} morphism $u:L'\rightarrow \varprojlim F$ such that $\pi_x'=\pi_x\circ u$ for all $x\in \ob(I)$.
\end{definition}

It is possible that a functor does not have a limit at all. However, if a functor does have a limit then the terminal property of the limit guarantees its uniqueness up to isomorphism. For this reason, we sometimes refer to a limit as \emph{the} limit of a functor. When $I$ is a finite category and $\Ccal=\vect$, any functor $F:I\rightarrow \vect$ admits a limit in $\vect$.

Cocones and colimits are defined in a dual manner:
\begin{definition}[Cocone]\label{def:cocone} Let $F:I\rightarrow \Ccal$ be a functor. A \emph{cocone} over $F$ is a pair $\left(C,(i_x)_{x\in \ob(I)}\right)$ consisting of an object $C$ in $\Ccal$ and a collection $(i_x)_{x\in \ob(I)}$ of morphisms $i_x:F(x)\rightarrow C$ that commute with the arrows in the diagram of $F$, i.e. if $g:x\rightarrow y$ is a morphism in $I$, then $i_x= i_y\circ F(g)$ in $\Ccal$, i.e. the diagram below commutes.
\end{definition}
\[\begin{tikzcd}
&C\\
F(x)\arrow{ru}{i_x}\arrow{rr}[swap]{F(g)}&&F(y)\arrow{lu}[swap]{i_y}
\end{tikzcd}\]

A colimit of a functor $F:I\rightarrow \Ccal$ is an initial object in the collection of cocones over $F$:

\begin{definition}[Colimit]\label{def:colimit}Let $F:I\rightarrow \Ccal$ be a functor. A \emph{colimit} of $F$ is a cocone, denoted by $\left(\varinjlim F,\ (i_x)_{x\in \ob(I)}\right)$ or simply $\varinjlim F$, with the following \emph{initial} property: If there is another cocone $\left(C', (i'_x)_{x\in \ob(I)}\right)$ of $F$, then there is a \emph{unique} morphism $u:\varinjlim F\rightarrow C'$  such that $i'_x=u\circ i_x$ for all $x\in \ob(I)$.
\end{definition}

It is possible that a functor does not have a colimit at all. However, if a functor does have a colimit then the initial property of the colimit guarantees its uniqueness up to isomorphism. For this reason, we sometimes refer to a colimit as \emph{the} colimit of a functor. When $I$ is a finite category and $\Ccal=\vect$, any functor $F:I\rightarrow \vect$ admits a colimit in $\vect$.

\paragraph{Multirank invariant.} 
We review the notion of the \emph{multirank invariant} for a persistence module \cite{thomas2019invariants}, which is a natural generalization of the rank invariant and differs from the generalized rank invariant. Then, we demonstrate that the multirank invariant of a zigzag module $M$ with a length of 3 completely determines the isomorphism type of $M$. This fact, along with Corollary \ref{cor:zz barcode determine bettis}, implies that the multirank invariant of any $\Zb^2$-module $N$ determines the bigraded Betti numbers of $N$. 

Let $\Pb$ be a poset. For a $\Pb$-module $M$ and any $s,t\in \Pb$, let us define the map

\begin{equation}\label{eq:map from s to t}M(s)\rightarrow M(t)=\begin{cases}
\varphi_M(s, t),&\mbox{if $s\leq t$}\\0,&\mbox{otherwise.}
\end{cases}
\end{equation}

\begin{definition}\label{def:multirank} For a $\Pb$-module $M$ and finite subsets $S, T \subset
\Pb$, the \textbf{multirank} from $S$ to $T$ is defined as 
\[\mrank_M(S,T):=\rank \left(\bigoplus_{s\in S}M(s)\rightarrow \bigoplus_{t\in T}M(t)\right),\]
where the map $\bigoplus_{s\in S}M(s)\rightarrow \bigoplus_{t\in T}M(t)$ is canonically defined by the map given in Equation (\ref{eq:map from s to t}) for each pair of $s\in S$ and $t\in T$. The \textbf{multirank invariant} of $M$ is the map that sends every pair of finite subsets $S,T\subset \Pb$ to $\mrank_M(S,T)$.
\end{definition}

A list of useful properties of the multirank invariant follows.
\begin{remark}\label{rem:properties of multirank} Let $M$ be any $\Pb$-module.
\begin{enumerate}[label=(\roman*)]
\item The multirank invariant subsumes the rank invariant: If $s,t\in \Pb$ with $s\leq t$, then for $S=\{s\}$ and $T=\{t\}$, the rank of $\varphi_{M}(s, t)$ coincides with $\mrk_M(S,T)$. \label{item:properties of multirank0}

\item For any $S,T\subset \Pb$ such that there is no pair $(s,t)\in S\times T$ with $s\leq t$ in $\Pb$, we have that 
\[\mrk_M(S,T)=0.\]\label{item:properties of multirank1}
\item For any $S\subset \Pb$ and any disjoint pair $T_1,T_2\subset \Pb$, \[\mrk_M(S,T_1\cup T_2)=\mrk_M(S,T_1)+\mrk_M(S, T_2).\]\label{item:properties of multirank2}
\item For any $T\subset S \subset \Pb$,
\[\mrk_M(S,T)=\sum_{t\in T}\dim M_t.\]\label{item:properties of multirank3}
\end{enumerate}

Let $N$ be another $\Pb$-module.
\begin{enumerate}[label=(\roman*),resume]
\item For any $S,T\subset \Pb$, \[\mrk_{M\oplus N}(S,T)=\mrk_M(S,T)+\mrk_N(S, T).\]\label{item:properties of multirank4}
\item  If $\dim(M_p)=\dim(N_p)$ for each $p\in \Pb$, then item \ref{item:properties of multirank3} implies that, whenever $T\subset S\subset \Pb$, 
$\mrk_M(S,T)=\mrk_N(S,T).$\label{item:properties of multirank5}
\item By Items \ref{item:properties of multirank2} and \ref{item:properties of multirank5}, if $\dim(M_p)=\dim(N_p)$ for every $p\in \Pb$, then the multirank invariants of $M$ and $N$ are identical if and only if for every \emph{disjoint} pair of subsets $S,T\in \Pb$,  
$\mrk_M(S,T)= \mrk_N(S,T)$.\label{item:properties of multirank6}
\end{enumerate}
\end{remark}

The multirank invariant is a complete invariant for zigzag modules of length 3:

\begin{proposition}\label{prop:multirank determines length-3 zigzag} Let $\Pb$ be any zigzag poset of length 3, and let $M,N$ be any $\Pb$-modules. If $M$ and $N$ have the same multirank invariant, then $M$ and $N$ are isomorphic.
\end{proposition}

\begin{proof}  
Assume that $\Pb=\{\bullet_1<\bullet_2<\bullet_3\}$. In this case, the rank invariant of $M$ uniquely determines the isomorphism type of $M$. Since the rank invariant is subsumed by the multirank invariant (Remark \ref{rem:properties of multirank} \ref{item:properties of multirank0}), the claim follows.

Next, assume that $\Pb=\{\bullet_1> \bullet_2 <\bullet_3\}$. Since every $\Pb$-module $M$ is interval decomposable (Theorem \ref{thm:interval decomposable}), it suffices to show that the barcode of $M$ can be extracted from the multirank invariant of $M$. The multirank of the interval modules $V_I:\Pb\rightarrow \vect$ for different pairs of $S,T\subset \Pb$ and for different intervals $I$ are given in the table below.

\begin{center}
\begin{tabular}{|l|l|*{6}{c|}}\hline

$(S,T)$
&$\{\bullet_1\}$&$\{\bullet_2\}$&$\{\bullet_3\}$
&$\{\bullet_1,\bullet_2\}$&$\{\bullet_2,\bullet_3\}$&$\{\bullet_1,\bullet_2,\bullet_3\}$\\\hline\hline
$\left(\{\bullet_1\}, \{\bullet_1\}\right)$ &1&&&1&&1\\\hline
$\left(\{\bullet_2\}, \{\bullet_2\}\right)$ &&1&&1&1&1\\\hline
$\left(\{\bullet_3\}, \{\bullet_3\}\right)$ &&&1&&1&1\\\hline
$\left(\{\bullet_2\}, \{\bullet_3\}\right)$ &&&&&1&1\\\hline
$\left(\{\bullet_2\}, \{\bullet_1\}\right)$ &&&&1&&1\\\hline
$\left(\{\bullet_2\}, \{\bullet_1,\bullet_3\}\right)$ &&&&1&1&1\\\hline
\end{tabular}
\end{center}
For example, the entry 1 in the second row and fifth column indicates that for $I=\{\bullet_1,\bullet_2\}$, $\mrk_{V_{I}}\left(\{\bullet_1\}, \{\bullet_1\}\right)=1$.
By Remark \ref{rem:properties of multirank} \ref{item:properties of multirank4}, the table above allows us to extract the barcode of $M$ as follows:
\begin{enumerate}[label=(\roman*)]
    \item From Rows 6 and 7, the multiplicity of $\{\bullet_2,\bullet_3\}$ in $\barc(M)$ equals \[\mrk_M(\{\bullet_2\},\{\bullet_1,\bullet_3\})-\mrk_M(\{\bullet_2\},\{\bullet_1\}).
    \]\label{item:23}
    \item From Rows 5 and 7, the multiplicity of $\{\bullet_1,\bullet_2\}$ in $\barc(M)$ equals \[\mrk_M(\{\bullet_2\},\{\bullet_1,\bullet_3\})-\mrk_M(\{\bullet_2\},\{\bullet_3\}).
    \]\label{item:12}
    \item From Rows 5, 6, and 7, the multiplicity of $\{\bullet_1,\bullet_2,\bullet_3\}$ in $\barc(M)$ equals
    \[\mrk_M(\{\bullet_2\},\{\bullet_3\})+\mrk_M(\{\bullet_2\},\{\bullet_1\})-\mrk_M(\{\bullet_2\},\{\bullet_1,\bullet_3\}).
    \]\label{item:123}
    \item The multiplicity of $\{\bullet_1\}$ in $\barc(M)$ is equal to $\mrk_M(\{\bullet_1\},\{\bullet_1\})$ (cf. Row 2) minus the sum of the multiplicities of $\{\bullet_1,\bullet_2\}$ and $\{\bullet_1,\bullet_2,\bullet_3\}$ found in the previous two items. The multiplicities of $\{\bullet_2\}$ and $\{\bullet_3\}$ in $\barc(M)$ can be computed in a similar way.
\end{enumerate}
Similarly, when assuming $\Pb=\{\bullet_1<\bullet_2>\bullet_3\}$, the barcode of any $\Pb$-module $N$ can be extracted from the multirank invariant of $N$.
\end{proof}

By Corollary \ref{cor:zz barcode determine bettis} and Proposition \ref{prop:multirank determines length-3 zigzag}, we have:

\begin{corollary}\label{cor:multirank determines bettis} The multirank invariant of a finitely generated $\Zb^2$-module $M$ determines the bigraded Betti numbers of $M$.
\end{corollary}

Since the bigraded Betti numbers do not determine the standard rank invariant of a 2-parameter persistence module \cite[Section 1.6]{lesnick2015interactive}, the fact that the multirank invariant subsumes the standard rank invariant (Remark \ref{rem:properties of multirank} \ref{item:properties of multirank0}) implies that the converse of the previous corollary does not hold.

\paragraph{Comparison between the multirank invariant and the generalized rank invariant.}
We show that neither the generalized rank invariant nor the multirank invariant is a strictly stronger invariant than the other. We do this by providing two pairs of persistence modules: The first pair is distinguishable by their multirank invariants but not by their generalized rank invariants. The second pair exhibits the opposite scenario.

In what follows, let $\Pb:=\{a,b,c,d\}$ be the poset equipped with the partial order $\leq:=\{(b,a),(c,a),(d,a)\}\subset \Pb\times \Pb$. The Hasse diagram of $\Pb$ is given below.

\begin{center}
\begin{tikzcd}[sep=small]
  & a \arrow[d, dash] \arrow[ld, dash] \arrow[rd, dash]\\
  b  & c & d
 \end{tikzcd}
 \end{center}
\begin{example}\label{ex:identical GPD, different mrk}  A $\Pb$-module $M$ is given by
\begin{center}
\begin{tikzcd}
  & k^2   \\
  k \arrow[ru, hook,"i_1"] & k \arrow[u, hook,"i_2"]& k, \arrow[ul, hook,"i_1+i_2"']
 \end{tikzcd}
 \end{center}
\noindent where $i_1,i_2:k\rightarrow k^2$ are the canonical inclusions into the first factor and the second factor of $k^2$, respectively. 
Consider the interval modules $V_{\{a\}}$, $V_{\{a,b\}}$, $V_{\{a,c\}}$, $V_{\{a,d\}}$  over $\Pb$.

It is not hard to verify that the generalized rank invariants of the $\Pb$-modules $N_1:=M\oplus V_{\{a\}}$ and $N_2:=V_{\{a,b\}}\oplus V_{\{a,c\}} \oplus V_{\{a,d\}}$ are identical. However, for $S:=\{b,c,d\}$ and $T:=\{a\}$, we have that 
\[\mrk_{N_1}(S,T)=2\neq3=\mrk_{N_2}(S,T).
\]
\end{example}

\begin{example}\label{ex:identical mrk, different GPD}
Consider the $\Pb$-modules $L_1:=M\oplus V_{\Pb}$ and $L_2:=V_{\{a,b,c\}}\oplus V_{\{a,c,d\}}\oplus V_{\{a,b,d\}}$. By Theorem \ref{thm:rkequalsintervals}, we have that
\[\rk_{L_1}(\Pb)=1\neq 0=\rk_{L_2}(\Pb).\] 
On the other hand, we claim that for any pair $S,T\subset \Pb$,
\begin{equation}\label{eq:same mrk}\mrk_{L_1}(S,T)= \mrk_{L_2}(S,T).
\end{equation}
Since $\dim((L_1)_p)=\dim((L_2)_p)$ for each $p\in \Pb$, Remark \ref{rem:properties of multirank} \ref{item:properties of multirank6} implies that one needs to check Equation (\ref{eq:same mrk}) only for  disjoint nonempty sets $S,T\subset\Pb$. Let $S,T\subset \Pb$ be nonempty disjoint sets. Note that, unless $a$ belongs to $T$, the both sides of Equation (\ref{eq:same mrk}) are zero. Therefore, it is only necessary to verify Equation (\ref{eq:same mrk}) in the case where $S\subset\{b, c, d\}$ and $T = \{a\}$. This verification is straightforward.

\end{example}

\end{document}